\documentclass[10pt]{amsart}
\setlength{\textwidth}{12.5cm}
\setlength{\textheight}{19cm}
\setlength{\parindent}{16pt}

\usepackage[utf8]{inputenc} 
\usepackage[T1]{fontenc} 
\usepackage[english]{babel}

\usepackage{amsfonts,amsmath,amsthm,amssymb,color,mathtools,pdfsync,todonotes,hyperref,mathrsfs,extarrows,bigints,cleveref,enumitem, dirtytalk}
 \usepackage{setspace} 
\usepackage{indentfirst,lettrine}

 \def\N{\mathbb{N}}

\def\elleom#1{L^{#1}(\Omega)}

\def\Sob#1{W^{1,#1}_0}
\def\Sobom#1{W^{1,#1}_0(\Omega)}

\def\Mosco{\overset{M}{\to}}

\numberwithin{equation}{section}
%%%%%%%%%%%%%COLORI
\def\io{\int_\Omega}

%%%%%%%%%%%

\newtheorem{example}{Example}[section]

\newtheorem{dhef}[example]{\sc Definition}
\newtheorem{lemma}[example]{\sc Lemma}
\newtheorem{ohss}[example]{\sc Remark}

\newtheorem{prop}[example]{\sc Proposition}
\newtheorem{theo}[example]{\sc Theorem}

 \newcommand{\abs}[1]{\left|#1\right|}

 \def\car#1{\raise1pt\hbox{$\chi$}_{#1}}

 \def\wli{W^{1,p}_0(\Omega)\cap L^\infty(\Omega)}

 \def\norma#1#2{ \|#1 \|_{\lower 4pt
 \hbox{$\scriptstyle#2$}}}

 %\definecolor{red}{rgb}{1,0,1}
 
\newcommand{%
\immediate\write18{texcount -1 -sum -merge -char .tex > -chars.sum }%
\input{-chars.sum}%
}[1]{%
\immediate\write18{texcount -1 -sum -merge -char #1.tex > #1-chars.sum }%
\input{#1-chars.sum}%
}

\newcommand{%
\immediate\write18{texcount -1 -sum -merge .tex > -words.sum }%
\input{-words.sum}%
}[1]{%
\immediate\write18{texcount -1 -sum -merge #1.tex > #1-words.sum }%
\input{#1-words.sum}%
}

%%%%%%%%%%%%%%%%%%%%%%%%%%%%%%%%%%%%%%%%%%%%%%%%%%%%%%%%%%%%%%%%%%
%Lucio \textheight13cm
%%%%%%%%%%%%%%%%%%%%%%%%%%%%%%%%%%%%%%%%%%%%%%%%%%%%%%%%%%%%%%%%%%

\title[Mosco-Stability of Natural Growth Obstacle Problems]{Mosco-convergence of convex sets\\ and unilateral problems for differential operators  with lower order terms \\ having natural growth}

\author{    L. Boccardo -    M. A. Palladino -     M. Picerni }

\address{Istituto Lombardo - Sapienza Universit\`a di Roma}
 \email{boccardo@mat.uniroma1.it}
 
\address{Scuola Normale Superiore, Pisa}
\email{mariaantonietta.palladino@sns.it}

\address{SISSA, Trieste}
\email{mpicerni@sissa.it}

\begin{document}

\date{}

\maketitle

\vspace*{-2em}
\begin{flushright}
\small
\textit{Dedicato a Umberto Mosco}
\end{flushright}
\vspace{1em}

\begin{abstract}
We study the stability of solutions to a class of variational inequalities posed on obstacle-type convex sets, under Mosco-convergence. Specifi\\-cally, we consider problems of the form  
\[
\begin{cases}
u \in W_0^{1,p}(\Omega) \cap L^\infty(\Omega), 
& u \geq \psi \quad \text{ in } \Omega, \\[8pt]
\langle A(u), v - u \rangle
+ \displaystyle\int_{\Omega} H(x, u, D u)(v - u) \geq 0,  
& \\[14pt]\forall\; v \in W_0^{1,p}(\Omega) \cap L^\infty(\Omega), \quad &v \geq \psi \quad \text{ in } \Omega.
\end{cases}
\]
\noindent
Here, \( A \) is a Leray–Lions type operator, mapping \( W_0^{1,p}(\Omega) \) into its dual \( W^{-1, p'}(\Omega) \), while \( H(x, u, D u) \) 
grows like \( |D u|^p \). 
The obstacle \( \psi \) is a function in \( W_0^{1,p}(\Omega) \cap L^\infty(\Omega) \).  \noindent
Our main result establishes that the solutions are stable under Mosco-convergence of the constraint sets. This extends classical stability results to natural growth problems.  
\end{abstract}
\tableofcontents

\section{Introduction}

The main focus of the papers \cite{mosco}, \cite{umosco} by Umberto Mosco is the study of the convergence properties of solutions to variational inequalities involving strongly monotone operators. In particular, the author investigates under which conditions on the convex sets the solutions converge, thereby introducing a notion of convergence for sequences of convex sets in reflexive Banach spaces.
Since then, this condition is known as Mosco-convergence and several papers are devoted to this subject.

In a recent paper (see \cite{BoPi1}) the stability result by Mosco has been extended to pseudomonotone operators (which, in general, are not monotone, see \cite{BoDa}) of differential type defined in Sobolev spaces.
	
	On the other hand, monotone operators and convex functionals are strongly related: after the first results by Umberto Mosco on convex functionals, a  paper  dedicated to him (cf. \cite{b-80}) studies the convergence in \( W^{1,p}_0(\Omega) \) of constrained minima on convex sets \( C_n \) for integral functionals of the form  
	$$
	I(v)=\io i(x,Dv)-\io f(x)\,v(x),
	$$
	where \( i(x, \xi) \) is a convex function in \( \xi \), assuming Mosco-convergence 
	(see Definition \eqref{dhef:mosco} below) of the sets \( C_n \). We point out that the proofs do not change if we consider functionals like
	\[
J(v) = \int_{\Omega} j(x, v, Dv) - \int_{\Omega} f(x) v(x),
\]  
where \( j(x, s, \xi) \) is strictly convex in \( \xi \). 

Furthermore, we recall that, in general,  $J$ is convex only if $j(x,s,\xi)$ does not depend on $s$ (see \cite{DacorognaNonConvex}) and that (roughly speaking) its Fr\'echet dfferential $J'$ is of the form
\begin{equation}
\label{j'}
\langle J'(v),w\rangle=\io \partial_\xi j(x,u,Du)\cdot Dw +\io \partial_s j(x,u,Du)w - \io f\,w,
\end{equation}
for all $w\in\wli$.
In particular, the lower order term $\partial_s j(x,u,Du)$ has {\em natural growth}, meaning it has order $p$ with respect to the gradient.

This motivates the study of variational inequalities in $\Sobom{p}$, with\\\noindent $p\in(1,\infty)$, involving a natural growth term. This problem was studied in \cite{BoccardoMurat} and \cite{BoccardoMuratPuel} under the assumption that the convex set was of obstacle type, specifically:
\[
\mathcal{C}(\psi) = \{u \in W_0^{1,p}(\Omega) \mid u \geq \psi \text{ in } \Omega \}.
\]

Building on these results, our aim is to extend the result of \cite{BoPi1}  to differential operators with a Leray–Lions type principal part and lower order terms exhibiting natural growth. More specifically, this paper examines the stability properties of solutions to
\begin{equation}\label{Var_ineq_intro}
	\left\{
	\begin{aligned}
		&u \in W_{0}^{1, p}(\Omega) \cap L^{\infty}(\Omega), \quad u \geq \psi \quad \text{ in } \Omega, \\
		&\langle A(u), v - u \rangle + \int_\Omega a_0(x,u)(v - u) + \int_{\Omega} H(x,u,Du)(v - u) \,   \geq 0, \\
		&\forall\; v \in W_{0}^{1, p}(\Omega) \cap L^{\infty}(\Omega), \quad v \geq \psi \quad \text{ in } \Omega,
	\end{aligned}
	\right.
\end{equation}
where \( A \) is a Leray–Lions type operator and \( H \) has natural growth,
under Mosco-convergence of convex sets.

We point out that problems of the type \eqref{Var_ineq_intro} are loosely related to those studied in \cite{BensoussanFrehseMosco}, where the authors consider a quasilinear quasivariational inequality arising from a stochastic control problem with a quadratic growth Hamiltonian and obstacle.

\section{Main result and outline}
In this section, we present the main result of this work. Before proceeding, let us recall the definition of Mosco-convergence.

\begin{dhef}\label{dhef:mosco}
    We say that a sequence $\{\mathcal C_n\}_n $, of closed convex subsets of a Banach space $X$, Mosco-converges to a closed convex set 
 $\mathcal C_0\subseteq X$ if: 
    \begin{enumerate}[label=\textbf{\arabic*},ref=\arabic*]
        \item\label{Mosco1}  
        For every subsequence $\{v_{n_i}\}_i$, with $ v_{n_i}\in \mathcal C_{n_i} $, weakly convergent to $v_0$, we have $v_0\in \mathcal C_0$.
        \item\label{Mosco2}
        For every $v_0\in \mathcal C_0$, there exist $ v_{n}\in \mathcal C_{n} $ such that 
        $\norma{v_{n}-v_{0}}{}\to0$.
    \end{enumerate}
We will write $\mathcal C_n\Mosco \mathcal C_0$.    
\end{dhef}
In particular, we focus on a specific class of convex sets, known as obstacle-type convex sets.
Given \( \psi \in W_0^{1,p}(\Omega) \cap L^\infty(\Omega) \), we define the associated obstacle-type convex set as
\[
 \mathcal{C}(\psi) = \left\{ v \in W_0^{1,p}(\Omega) : v \geq \psi \text{ a.e. in } \Omega \right\}.
\]
\noindent
We are interested in the stability properties of the solutions to the inequality 
\begin{equation}\label{outline:varineq}
	\left\{
	\begin{aligned}
		& u \in \mathcal{C}(\psi) \cap L^\infty(\Omega), \\
		& \langle A(u), v - u \rangle + \int_\Omega a_0(x,u)(v-u) + \int_\Omega H(x,u,Du)(v - u) \,   \geq 0, \\
		& \forall\; v \in \mathcal{C}(\psi) \cap L^\infty(\Omega),
	\end{aligned}
	\right.
\end{equation}
under Mosco-convergence of the obstacle-type convex sets.
\\
Here, we state the assumptions on $\Omega$, $p$, $A$, $a_0$, $H$ and $\psi$.
\begin{itemize}
    \item \( \Omega \) is an open, bounded subset of \( \mathbb{R}^N \).
    \item \( p \in (1, +\infty) \) and \( p' = \frac{p}{p-1} \).
    \item The principal part \( A \) is a differential operator of second order in divergence form acting from \( W_0^{1,p}(\Omega) \) into \( W^{-1,p'}(\Omega) \):
    \[
    A(v) = -\operatorname{div}(a(x,v,Dv)),
    \]
    where \( a \) is a Carathéodory function with values in \( \mathbb{R}^N \) satisfying the following conditions for all \( s \in \mathbb{R} \), \( \xi, \eta \in \mathbb{R}^N \), and for almost every \( x \in \Omega \):
    \begin{align}
        & a(x, s, \xi) \xi \geq \alpha |\xi|^p, \label{prop:coercivity} \\
        & |a(x, s, \xi)| \leq \beta \left(h(x) + |s|^{p-1} + |\xi|^{p-1}\right), \label{prop:boundedness} \\
        & [a(x, s, \xi) - a(x, s, \eta)] (\xi - \eta) > 0, \label{prop:monotonicity}
    \end{align}
    where \( \alpha, \beta \) are positive constants and \( h \in L^{p'}(\Omega) \).
   
    \item \( a_0 \) is a Carathéodory function with values in \( \mathbb{R} \) such that for all \( s \in \mathbb{R} \), and for almost every \( x \in \Omega \):
    \begin{equation}\label{a_0_boundedness}
        |a_0(x, s)| \leq \beta \left[h(x) + |s|^{p-1} \right],
    \end{equation}
    \begin{equation}\label{prop:a_0}
      a_0(x, s) s \geq \alpha_0 |s|^p, \quad \text{with } \alpha_0 > 0.
    \end{equation}
  
    \item \( H(x, s, \xi) \) is a Carathéodory function such that for all \( s \in \mathbb{R} \), \( \xi \in \mathbb{R}^N \) and for almost every \( x \in \Omega \):
    \begin{equation}\label{H_boundness}
        |H(x, s, \xi)| \leq f(x) + b(|s|) |\xi|^p,
    \end{equation}
    where $f\in\elleom\infty$ is a positive function and \( b(\cdot) \) is a positive, increasing, continuous function defined on \( \mathbb{R}^+ \).
\end{itemize}
\begin{ohss}
    Note that, under assumptions \eqref{prop:coercivity}, \eqref{prop:coercivity}, \eqref{prop:monotonicity}, \( A \) is a bounded, continuous pseudomonotone operator of Leray-Lions type.
\end{ohss}
\noindent
The existence of a solution \( u \in W_0^{1,p}(\Omega) \) to the variational inequalities \eqref{outline:varineq} was proved in \cite{BoccardoMuratPuel}.
 More precisely, the following theorem holds.
\begin{theo}\label{main_BoMuPu}
    There exists a solution \( u \in W_0^{1,p}(\Omega) \) to
    \begin{equation}
    \left\{
    \begin{aligned}
    & u \in \mathcal{C}(\psi) \cap L^\infty(\Omega),  \\
    & \langle A(u), v - u \rangle + \int_\Omega a_0(x,u)(v-u) + \int_\Omega H(x,u,Du)(v - u) \,   \geq 0, \\
    & \forall\; v \in \mathcal{C}(\psi) \cap L^\infty(\Omega).
    \end{aligned}
    \right.
    \end{equation}
\end{theo}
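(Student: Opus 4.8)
The plan is to run the Boccardo--Murat--Puel approximation scheme: regularise the Hamiltonian to a bounded one, solve the resulting well-posed unilateral problems, derive a priori bounds that do not depend on the regularisation, and pass to the limit.

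\emph{Approximation and approximate problems.} First I would replace $H$ by the Carathéodory functions
\[
H_n(x,s,\xi)=\frac{H(x,s,\xi)}{1+\frac1n\,|H(x,s,\xi)|},
\]
which satisfy $|H_n|\le n$, $|H_n(x,s,\xi)|\le f(x)+b(|s|)|\xi|^p$, and $H_n(x,s_j,\xi_j)\to H(x,s,\xi)$ whenever $(s_j,\xi_j)\to(s,\xi)$. For fixed $n$ the operator $B_n(v)=A(v)+a_0(\cdot,v)+H_n(\cdot,v,Dv)$ is bounded from $\wp$ into $W^{-1,p'}(\Omega)$, coercive on $\mathcal C(\psi)$ (the contribution of the $L^\infty$-bounded term $H_n$ being absorbed by the coercivity \eqref{prop:coercivity}, \eqref{prop:a_0} of $A$ and $a_0$), and pseudomonotone: if $v_j\deb v$ in $\wp$ and $\limsup_j\langle B_n(v_j),v_j-v\rangle\le0$, then the $a_0$- and $H_n$-contributions to $\langle B_n(v_j),v_j-v\rangle$ vanish in the limit (Rellich compactness, using that $H_n$ is bounded), whence $\limsup_j\langle A(v_j),v_j-v\rangle\le0$ and the Leray--Lions structure of $A$ forces $Dv_j\to Dv$ a.e., so that $B_n(v_j)\to B_n(v)$. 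By the classical existence theory for variational inequalities governed by coercive pseudomonotone operators on a closed convex set, there is $u_n\in\mathcal C(\psi)$ solving the inequality with $H_n$ in place of $H$; since $H_n(\cdot,u_n,Du_n)\in L^\infty(\Omega)$, a Stampacchia-type argument (recall $\psi\in L^\infty(\Omega)$) gives $u_n\in L^\infty(\Omega)$.

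\emph{A priori estimates.} The core of the argument consists of two bounds, uniform in $n$, both obtained through exponentially weighted test functions of Boccardo--Murat--Puel type that must be modified so as to stay admissible for the constraint $v\ge\psi$. First, $\|u_n\|_{L^\infty(\Omega)}\le M$: the lower bound is immediate since $u_n\ge\psi\ge-\|\psi\|_{L^\infty(\Omega)}$, while the upper bound follows from a Stampacchia iteration on the super-level sets $\{u_n>k\}$, $k\ge\|\psi\|_{L^\infty(\Omega)}$ (where the obstacle plays no role), in which the term $b(|s|)|\xi|^p$ is absorbed by an exponential weight; here $f\in L^\infty(\Omega)$ and $a_0(x,s)s\ge\alpha_0|s|^p$ are essential. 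Once $\|u_n\|_{L^\infty(\Omega)}\le M$ is known, $b(|u_n|)\le b(M)$, and a second (admissible, exponentially weighted) test function yields $\|u_n\|_{\wp}\le C$ uniformly in $n$; in particular $\{H_n(\cdot,u_n,Du_n)\}_n$ is bounded in $L^1(\Omega)$, since $|H_n(x,u_n,Du_n)|\le f(x)+b(M)|Du_n|^p$.

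\emph{Passage to the limit.} Along a subsequence, $u_n\deb u$ in $\wp$ and, by Rellich compactness, $u_n\to u$ in $L^p(\Omega)$ and a.e.; as $\mathcal C(\psi)$ is convex and strongly closed, hence weakly closed, $u\in\mathcal C(\psi)$, and $|u|\le M$ a.e. Testing the $u_n$-inequality with the admissible function $v=\min(u_n,u)=u_n-(u_n-u)^+$ (admissible because a minimum of functions $\ge\psi$ is again $\ge\psi$), and combining the strict monotonicity \eqref{prop:monotonicity} of $a(x,s,\cdot)$ with the Boccardo--Murat--Puel truncation technique for natural-growth problems, one proves $Du_n\to Du$ a.e.\ in $\Omega$; therefore $H_n(\cdot,u_n,Du_n)\to H(\cdot,u,Du)$ a.e., and together with the $L^1$-bound and the equi-integrability provided by the strong $L^p$-convergence of the gradients this gives $H_n(\cdot,u_n,Du_n)\to H(\cdot,u,Du)$ in $L^1(\Omega)$. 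Finally, fixing $v\in\mathcal C(\psi)\cap L^\infty(\Omega)$ and letting $n\to\infty$ in the $u_n$-inequality tested with this $v$, weak lower semicontinuity (together with the a.e.\ gradient convergence) handles $\langle A(u_n),v-u_n\rangle$, while the strong $L^p$- and $L^1$-convergences dispose of the lower order terms, yielding \eqref{outline:varineq} for $u$.

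\emph{Main difficulty.} The crux is the clash between the natural growth of $H$ and the unilateral constraint. In the unconstrained case the natural-growth terms are tamed by ``downward'' exponential test functions of the form $v=u_n-\varphi(u_n-\psi)$ with $\varphi$ superlinear, but these violate $v\ge\psi$; they must therefore be truncated or otherwise adjusted to remain in $\mathcal C(\psi)$ while still absorbing $b(|s|)|\xi|^p$, which is the technical heart of \cite{BoccardoMuratPuel}. The same obstruction reappears --- and has again to be resolved --- in the proof of the a.e.\ convergence of the gradients, itself indispensable for passing to the limit in the merely $L^1$-bounded term $H(\cdot,u,Du)$.
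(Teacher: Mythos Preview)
Your proposal is essentially the paper's scheme (itself taken from \cite{BoccardoMuratPuel}): truncate $H$ to a bounded $H_n$, solve the approximate obstacle problems, derive uniform $L^\infty$ and $W^{1,p}_0$ bounds via exponentially-weighted test functions that remain admissible for the constraint, and pass to the limit through a.e.\ convergence of the gradients; your shortcut $u_n\ge\psi\ge-\|\psi\|_{L^\infty(\Omega)}$ for the lower bound is a legitimate simplification of the paper's test-function argument, which is only needed to get the explicit constant \eqref{Def:C_infty}.

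One point to correct: the test function $v=\min(u_n,u)=u_n-(u_n-u)^+$ you name for the gradient step does not on its own suffice, because the natural-growth contribution $\int_\Omega b(M)\,|Du_n|^p\,(u_n-u)^+$ is not controlled when $|Du_n|^p$ is a priori only bounded in $L^1$. The actual admissible test function---which you correctly invoke in your closing paragraph as ``the Boccardo--Murat--Puel technique'' and which the paper writes out in Step~1 of the proof of Theorem~\ref{Theoprincipale}---is the exponential convex combination $v_n=u_n-\delta\,\varphi_\lambda(u_n-w_n)$ (see also the proof of Lemma~\ref{LEMMA:disug:stimaSob}); its weight absorbs $b(M)|Du_n|^p$ from the outset via Remark~\ref{prop:phi_lambda}, so that \eqref{LL:conv_an} holds and the Leray--Lions argument for a.e.\ gradient convergence goes through. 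This is an imprecision in presentation rather than a gap in strategy.
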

\noindent
We now state the main problem: let \( \psi_n,\, \psi_0 \in W^{1, p}_0(\Omega) \cap L^\infty(\Omega) \). From the argument in the previous section, we know that there exist solutions \( u_n \) and \( u_0 \) to the following variational problems:
\begin{equation}
	\left\{
	\begin{aligned}\label{Mosco:problema:psin}
		&u_n \in \mathcal{C}(\psi_n)\cap L^{\infty}(\Omega),  \\
		&\langle A(u_n), v - u_n \rangle + \int_\Omega a_0(x,u_n)(v - u_n) + \int_{\Omega} H(x, u_n, D u_n)(v - u_n) \,   \geq 0, \\
		&\forall\; v \in \mathcal{C}(\psi_n) \cap L^{\infty}(\Omega)
	\end{aligned}
	\right.
\end{equation}
and
\begin{equation}\label{Mosco:problema:psi0}
	\left\{
	\begin{aligned}
		&u_0 \in\mathcal{C}(\psi_0) \cap L^{\infty}(\Omega), \\
		&\langle A(u_0), v - u_0 \rangle + \int_\Omega a_0(x,u_0)(v - u_0) + \int_{\Omega} H(x, u_0, D u_0)(v - u_0) \,   \geq 0, \\
		&\forall\; v  \in \mathcal{C}(\psi_0)\cap L^{\infty}(\Omega).
	\end{aligned}
	\right.
\end{equation}
\noindent
The main result of this paper states that, if the sequence \( \{\psi_n\}_n  \) is such that convex sets \( \mathcal{C}(\psi_n) \) Mosco-converge to \( \mathcal{C}(\psi_0) \), then the solutions \( u_n \) of \eqref{Mosco:problema:psin} satisfy a stability property. 
That is, the sequence \(\{u_n\}_n\) has a subsequence that converges in \( W^{1,p}_0(\Omega) \) to a solution \( u_0 \) of \eqref{Mosco:problema:psi0}. This is formalized in the following theorem.

\begin{theo}\label{Theoprincipale}
    Let $\psi_n,\, \psi_0\in \Sobom{p}\cap\elleom\infty$. Consider the convex sets
    \[\mathcal{C}(\psi_n)=\{v\in\Sobom{p}\,:\, v \geq \psi_n \,\text{ a.e. in } \Omega\},\]
    \[\mathcal{C}(\psi_0)=\{v\in\Sobom{p}\,:\,v \geq \psi_0\, \text{ a.e. in } \Omega\},\]
    and assume that $\{\psi_n\}_n  $ is bounded in $L^\infty(\Omega)$ and that
    $$
    \mathcal{C}(\psi_n)\;\Mosco\mathcal{C}(\psi_0). 
    $$
    For every $n$, consider the solution $u_n$ to \eqref{Mosco:problema:psin} 
    Then there exists a subsequence $\{u_{n_j}\}_j$ and $u_0\in \mathcal{C}(\psi_0)$ such that $u_{n_j}\to u_0$ in $\Sobom p$. Moreover, $u_0$ is a solution to \eqref{Mosco:problema:psi0}. 
\end{theo}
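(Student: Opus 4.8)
\section*{Proof proposal}

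The plan is to follow the classical Mosco-stability scheme (test the $n$-th inequality against a recovery sequence for a fixed competitor in $\mathcal C(\psi_0)$, and vice versa), but the natural growth term $H(x,u_n,Du_n)$ forces two departures from the linear theory: a priori $L^\infty$ bounds uniform in $n$, and a strong $W^{1,p}_0$ compactness argument that controls the quadratic (order $p$) gradient term. I would organize the argument in the following steps.

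\textbf{Step 1: uniform a priori estimates.} First I would show $\{u_n\}_n$ is bounded in $\elleom\infty$ and in $\Sobom p$. For the $L^\infty$ bound, test \eqref{Mosco:problema:psin} with $v = u_n - (u_n - k)^+$ for $k \geq \nor{\psi_n}{\elleom\infty}$ (admissible since $\psi_n$ is bounded uniformly by hypothesis, so this $v$ still lies above $\psi_n$); using \eqref{prop:coercivity}, \eqref{prop:a_0}, and the growth \eqref{H_boundness} with $b(\cdot)$ evaluated at the bound for $k$, together with a Stampacchia-type iteration (as in \cite{BoccardoMuratPuel}), yields $\nor{u_n}{\elleom\infty} \leq M$ with $M$ independent of $n$. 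This is the key point where the uniform $L^\infty$ bound on $\{\psi_n\}_n$ enters: it makes $b(|u_n|) \leq b(M)$ a fixed constant, taming the natural growth term. For the energy bound, test with a fixed $v_0 \in \mathcal C(\psi_0)\cap\elleom\infty$ and its recovery sequence $w_n \in \mathcal C(\psi_n)$ (from property \ref{Mosco2}, $w_n \to v_0$ strongly in $\Sobom p$, hence $\{w_n\}$ is bounded); expand, use coercivity on the left and \eqref{prop:boundedness}, \eqref{a_0_boundedness}, \eqref{H_boundness} with $b(M)$ on the right, and absorb $\alpha \io |Du_n|^p$ — here one must be a little careful, since $H$ contributes $b(M)\io |Du_n|^p |u_n - w_n|$, but $|u_n - w_n| \leq 2M + \nor{w_n}{\elleom\infty}$ is bounded and this does \emph{not} on its own beat $\alpha|Du_n|^p$. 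The standard fix (Boccardo--Murat--Puel) is to instead test with an exponential-type perturbation $v = u_n - \eta(u_n - w_n)$ where $\eta = \eta(u_n-w_n)$ involves $e^{\gamma s}$ tuned so that the principal part dominates $H$ pointwise; I would invoke exactly that device to close the $\Sobom p$ estimate.

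\textbf{Step 2: passage to the limit in the inequality.} By Step 1, up to a subsequence $u_{n_j} \deb u_0$ weakly in $\Sobom p$, weakly-$*$ in $\elleom\infty$, and a.e.\ in $\Omega$; by property \ref{Mosco1}, $u_0 \in \mathcal C(\psi_0)$. The crucial analytic step is upgrading this to \emph{strong} convergence $u_{n_j} \to u_0$ in $\Sobom p$. I would test \eqref{Mosco:problema:psin} with $v = u_{n_j} - \theta(u_{n_j} - w_{n_j})$ where $w_{n_j} \to u_0$ is the recovery sequence for $u_0$ and $\theta(s) = s\,e^{\gamma s^2}$ (or a truncated variant), the now-classical exponential test function from \cite{BoccardoMuratPuel} designed so that $a_0$ and $H$ get absorbed by the principal part; rearranging, one obtains $\limsup_j \langle A(u_{n_j}) - A(u_0), u_{n_j} - u_0\rangle \leq 0$ modulo terms that vanish because $w_{n_j} - u_0 \to 0$ strongly and $H(x,u_{n_j},Du_{n_j})$ is equi-integrable (again thanks to the uniform $L^\infty$ bound and a Vitali argument). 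The strict monotonicity \eqref{prop:monotonicity} then gives $Du_{n_j} \to Du_0$ a.e.\ (via the standard Leray--Lions lemma of Boccardo--Murat), and combined with the energy convergence this yields $u_{n_j} \to u_0$ strongly in $\Sobom p$.

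\textbf{Step 3: identification of the limit.} With strong convergence in hand, $a(x,u_{n_j},Du_{n_j}) \to a(x,u_0,Du_0)$ in $(\elleom{p'})^N$, $a_0(x,u_{n_j}) \to a_0(x,u_0)$ in $\elleom{p'}$, and $H(x,u_{n_j},Du_{n_j}) \to H(x,u_0,Du_0)$ in $\elleom 1$ (dominated/Vitali convergence using the a.e.\ gradient convergence and equi-integrability). Fix any $v \in \mathcal C(\psi_0)\cap\elleom\infty$ and take its recovery sequence $v_{n_j} \in \mathcal C(\psi_{n_j})$ with $v_{n_j} \to v$ strongly in $\Sobom p$; since $\{v_{n_j}\}$ can be taken bounded in $\elleom\infty$ (one truncates the recovery sequence at level $\nor v{\elleom\infty}$, which preserves membership in $\mathcal C(\psi_{n_j})$ because $\psi_{n_j}$ is uniformly bounded — this truncation point deserves a line of care), $v_{n_j}$ is admissible in \eqref{Mosco:problema:psin}. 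Passing to the limit in
\[
\langle A(u_{n_j}), v_{n_j} - u_{n_j}\rangle + \io a_0(x,u_{n_j})(v_{n_j}-u_{n_j}) + \io H(x,u_{n_j},Du_{n_j})(v_{n_j}-u_{n_j}) \geq 0
\]
term by term — the first by strong convergence of $a$ and of $v_{n_j}-u_{n_j}$, the last by $L^1$ convergence of $H$ against an $\elleom\infty$-bounded factor — yields exactly \eqref{Mosco:problema:psi0} for $u_0$.

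\textbf{Main obstacle.} The delicate point is Step 2: the natural growth of $H$ means one cannot simply test with $v = u_n - (u_n - w_n)$ and absorb, as the $\io |Du_n|^p|u_n-w_n|$ contribution is not small merely because $u_n - w_n$ is bounded. The resolution is the Boccardo--Murat--Puel exponential test function, whose parameter $\gamma$ must be chosen in terms of $\alpha$ and $b(M)$ so that the ``bad'' term is pointwise dominated; getting the bookkeeping right so that the residual terms genuinely vanish along the subsequence (which requires the strong convergence $w_{n_j}\to u_0$ furnished by Mosco property \ref{Mosco2}, plus equi-integrability of $H(x,u_{n_j},Du_{n_j})$) is the technical heart of the proof. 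Everything else is a careful but routine adaptation of the Mosco--\cite{b-80} scheme to the natural-growth setting, relying throughout on the uniform $L^\infty$ bound from Step 1.
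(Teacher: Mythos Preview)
Your proposal follows the same route as the paper: uniform $L^\infty$ and $W^{1,p}_0$ bounds via the Boccardo--Murat--Puel machinery, the exponential test function $\varphi_\lambda(s)=se^{\lambda s^2}$ against a recovery sequence $w_n\to u^*$ to drive the monotone quantity $\int_\Omega[a(x,u_n,Du_n)-a(x,u_n,Dw_n)]\cdot D(u_n-w_n)$ to zero, a.e.\ gradient convergence via the Leray--Lions argument, upgrade to strong $W^{1,p}_0$ convergence, and finally passage to the limit against a truncated recovery sequence for an arbitrary competitor $v$. Your truncation remark at the end of Step~3 is exactly the paper's \Cref{lem:recovery_inf}.

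One point needs correcting. In Step~2 (and again in your ``Main obstacle'' paragraph) you say the residual terms vanish partly because ``$H(x,u_{n_j},Du_{n_j})$ is equi-integrable.'' At that stage you only have \emph{weak} gradient convergence, so equi-integrability of $|Du_n|^p$---and hence of $H$---is not yet available; invoking it is circular, since it is essentially what strong $W^{1,p}_0$ convergence would give you. The mechanism of the exponential test function, which you otherwise identify correctly, is that with $\lambda=b(M)^2/(4\alpha^2)$ the pointwise inequality $\alpha\varphi_\lambda'(t)-b(M)\,|\varphi_\lambda(t)|\geq\alpha/2$ \emph{absorbs the $H$ contribution into the principal part}, so no $H$-residual survives on the right-hand side at all. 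What remains involves only $a(x,u_n,Dw_n)$, $a(x,u_n,Du_n)\,Dw_n$, $a_0(x,u_n)$, and factors $\varphi_\lambda(u_n-w_n)\to 0$ boundedly; these vanish by weak--strong pairings using $Dw_n\to Du^*$ strongly in $L^p$, with no appeal to equi-integrability of $H$. (A smaller quibble: the $L^\infty$ bound in \cite{BoccardoMuratPuel}, reproduced as \Cref{LEMMA:disug:stimainfty}, also uses $\varphi_\lambda$ together with the sign condition \eqref{prop:a_0}, not a Stampacchia iteration; your proposed test $v=u_n-(u_n-k)^+$ would face the same natural-growth obstruction you flag later.) With this corrected, your sketch matches the paper's proof.
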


\smallskip
\noindent\textbf{Outline of the paper:}
in Section \ref{moscoconv}, we recall some results regarding Mosco-convergence of convex sets. In Section \ref{Sect:ripetistime}, we follow the proof from \cite{BoccardoMuratPuel} to establish a quantitative estimate for \( u_n \), the solution to \eqref{Mosco:problema:psin}. In Section \ref{Sect:Mosco}, we prove \Cref{Theoprincipale}. 

For simplicity, in what follows, we will extract subsequences when necessary, without explicitly stating it each time.
\section{Mosco-convergence}\label{moscoconv}
In this section we recall some basic results regarding Mosco-convergence (see \cite{mosco}).
We start with a simple example of the relation between the convergence of convex sets and the convergence of solutions to variational inequalities.

 Let $H$ be a Hilbert space and $\{C_n\}_n$, $C_0$ be closed convex subsets of $H$. For any $f\in H$, let $u_n(f)$ (resp $u_0(f)$) be the projection of $f$ onto the set $C_n$ (resp $C_0$), which are characterized by the inequalities:
 \begin{equation}
 	\label{prn}
 	u_n \in C_n : \, (u_n - f \mid v - u_n) \geq 0, \quad \forall\; v \in C_n;
 \end{equation} 
 \begin{equation}
 	\label{pr0}
 	u_0 \in C_0 : \, (u_0 - f \mid v - u_0) \geq 0, \quad \forall\; v \in C_0.
 \end{equation}
 
\begin{prop}The following are equivalent:
	\begin{itemize}
		\item $C_n\Mosco C_0$,
		\item $\norma{u_n(f) -u_0(f)}{}\to0$, \text{for all }$f \in H$.
	\end{itemize}
\end{prop}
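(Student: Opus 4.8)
The plan is to prove the two implications separately, in both directions exploiting the variational characterizations \eqref{prn}--\eqref{pr0} of the projections together with the Hilbert structure of $H$ (throughout I assume, as is implicit in the statement, that the sets $C_n$, $C_0$ are nonempty, so that the projections are well defined and unique).

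For the implication $C_n\Mosco C_0\,\Rightarrow\,\nor{u_n(f)-u_0(f)}\to0$, I would fix $f\in H$ and write $u_n=u_n(f)$, $u_0=u_0(f)$. By property \eqref{Mosco2} there are $w_n\in C_n$ with $\nor{w_n-u_0}\to0$, and since $u_n$ is the point of $C_n$ nearest to $f$ we get $\nor{u_n-f}\le\nor{w_n-f}\to\nor{u_0-f}$; hence $\{u_n\}_n$ is bounded, and along a subsequence $u_n\deb\bar u$. Property \eqref{Mosco1} gives $\bar u\in C_0$, weak lower semicontinuity of the norm gives $\nor{\bar u-f}\le\liminf_n\nor{u_n-f}\le\nor{u_0-f}$, and minimality of $u_0$ over $C_0$ forces $\nor{\bar u-f}\ge\nor{u_0-f}$; by uniqueness of the projection, $\bar u=u_0$. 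Then $u_n-f\deb u_0-f$ and $\nor{u_n-f}\to\nor{u_0-f}$, which in a Hilbert space forces $u_n\to u_0$ strongly; since the limit does not depend on the subsequence, the whole sequence converges.

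For the converse, assume $\nor{u_n(f)-u_0(f)}\to0$ for every $f\in H$. To verify \eqref{Mosco2}, given $v_0\in C_0$ one has $u_0(v_0)=v_0$, so $v_n:=u_n(v_0)\in C_n$ satisfies $\nor{v_n-v_0}=\nor{u_n(v_0)-u_0(v_0)}\to0$. To verify \eqref{Mosco1}, let $v_{n_i}\in C_{n_i}$ with $v_{n_i}\deb v_0$ and set $a_i:=u_{n_i}(v_0)$, so $a_i\to u_0(v_0)$ strongly by hypothesis. Testing inequality \eqref{prn}, written for the set $C_{n_i}$ and datum $v_0$, against $v=v_{n_i}\in C_{n_i}$ yields
\[
\sca{a_i-v_0}{v_{n_i}-a_i}\ge0 .
\]
Letting $i\to\infty$, the first factor converges strongly to $u_0(v_0)-v_0$ while the second converges weakly to $v_0-u_0(v_0)$, so the left-hand side tends to $-\nor{u_0(v_0)-v_0}^2$; hence $v_0=u_0(v_0)\in C_0$, as required.

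Everything here is elementary; the only delicate point is the upgrade from weak to strong convergence in the first implication, and that is exactly where the Hilbert norm together with the minimality property of the projection is used. (A secondary point worth stating carefully is that the convergence obtained along a subsequence is promoted to convergence of the full sequence by the uniqueness of $u_0(f)$.)
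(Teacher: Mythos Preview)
Your proof is correct and follows the same overall architecture as the paper's: for $C_n\Mosco C_0\Rightarrow\nor{u_n-u_0}\to0$ you establish boundedness via a recovery sequence, extract a weak limit, identify it with $u_0$ by minimality and uniqueness, and then upgrade to strong convergence; for the converse you take $f=v_0$ and read off both Mosco conditions exactly as the paper does. The one methodological difference is in the strong-convergence step: you use the distance-minimizing characterization of the projection together with the Hilbert-space fact ``weak convergence $+$ convergence of norms $\Rightarrow$ strong convergence'', whereas the paper stays with the variational inequality \eqref{prn}, tests it with a recovery sequence $z_n\to u_0$, and rearranges to obtain $\nor{u_n-u_0}^2\le(u_n-u_0\mid z_n-u_0)+(u_0-f\mid z_n-u_n)\to0$. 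Both devices are standard and of comparable length; yours is perhaps slightly more conceptual, the paper's slightly more self-contained.
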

\begin{proof}
We prove that  $\Mosco$ implies $\norma{u_n -u_0}{}\to0$.
\begin{itemize}
	\item Fix $w_0\in C_0$; there exists $w_n\in C_n$ such that $\norma{w_n-w_0}{}\to 0$; the choice $v=w_n$ in \eqref{prn} yields the boundedness (in $H$) of the sequence $\{u_n\}_n$.
	\item
	As a consequence, there exist $u^*\in H$ and a subsequence $\{u_{n_i}\}_i$ weakly convergent to $u^*$; the definition of Mosco-convergence implies that $u^*\in C_0$.
	\item Fix $w_0\in C_0$; there exists $w_n\in C_n$ such that $\norma{w_n-w_0}{}\to 0$; the choice $v=w_n$ in \eqref{prn} yields
	$$
	(u_{n_i} - f \mid w_{n_i} - u_{n_i})\geq0.
	$$
	We pass to the limit (use the weak l.s.c. of the norm) and we deduce that
	$$
	u^*\in C_0:\,(u^* - f \mid w_0 - u^*)\geq0,\;\forall\;w_0\in C_0;
	$$
	that is $u^*$ is a solution on $C_0$: the uniqueness of the solution says that $u^*=u_0$
	and the whole sequence weakly converges to $u_0$.
	\item
	There exists $z_n\in C_n$ such that $\norma{z_n-u_0}{}\to 0$; the choice $v=z_n$ in \eqref{prn} yields
	$$
	0\leq  (u_n -u_0 + u_0 - f \mid z_n-u_0 - u_n+u_0)
	$$
	and
	$$
	(u_n -u_0   \mid     u_n-u_0)\leq  (u_n -u_0   \mid z_n-u_0 ) 
	+( u_0 - f \mid z_n - u_n ),  
	$$
	which implies that $\norma{u_n -u_0}{}\to0$
\end{itemize}
Conversely, we prove that that if $\norma{u_n(f) -u_0(f)}{}\to0$, $\forall\;f\in H$, then  $C_n\Mosco C_0$. 
\begin{itemize}
	\item Let $\{w_n\}_n$ be a sequence weakly convergent to $w_0$, with $w_n\in C_n$.
	Consider the problems \eqref{prn} with $f=w_0$.
	$$
	u_n \in C_n : \, (u_n - w_0 \mid v - u_n) \geq 0, \quad \forall\; v \in C_n.
	$$
	Take $v=w_n$, then
	$$
	u_n \in C_n : \, (u_n - w_0 \mid w_n - u_n) \geq 0.
	$$
	The assumptions imply that it is possible to pass to the limit  and
	$$
	(u_0- w_0 \mid w_0 - u_0) \geq 0,  
	$$
	that is $w_0=u_0$; so that $w_0\in C_0$.
	\item
	Let $w_0\in C_0$. Define $f=w_0$, then $u_0=w_0$.
	Moreover  $\{u_n\}_n$ is the required sequence.
\end{itemize}
\end{proof}

In the specific case of obstacle-type convex sets, we can associate Mosco-convergence with the behavior of the obstacle functions. The following proposition provides sufficient conditions for the Mosco-convergence of these obstacle-type convex sets, which depend on the properties of the associated obstacles (see \cite{b-80}, Proposition 3).

\begin{prop}[Obstacle-Type Convex Sets]\label{ProbObstacleTypeMosco}
Let $\{\psi_n\}_n$ be a sequence of measurable functions such that the obstacle sets
$$ \mathcal{C}(\psi_n) = \{ v \in W_0^{1, p}(\Omega) : v \geq \psi_n  \text{  a.e. in } \Omega\}
$$
are nonempty. Then, $\mathcal{C}(\psi_n)$ Mosco-converges to $\mathcal{C}(\psi_0)$ if any of the following conditions hold:

\begin{enumerate}
\item $\psi_n$ weakly converges in $W_0^{1, p}(\Omega)$ to $\psi_0$, with $\psi_n \leq \psi_0$;
\item $\psi_n$ strongly converges in $W_0^{1, p}(\Omega)$ to $\psi_0$ (see Lemma 1.7 of \cite{mosco});
\item $\psi_n$ weakly converges in $W_0^{1, p}(\Omega)$ to $\psi_0$, with $\psi_n \geq \psi_0$, and $A(\psi_n) \leq T$, where $A(v) = -\operatorname{div}(a(x, D v))$ is a Leray-Lions operator and $T$ belongs to the dual of $W_0^{1, p}(\Omega)$;
\item $\psi_n$ strongly converges in $L^{\infty}(\Omega)$ to $\psi_0$, and there exists $\Psi \in W_0^{1, p}(\Omega)$ such that $\Psi \geq \psi_n$ for all $n$ (see \cite{BoccardoMurat});
\item $\psi_n$ weakly converges in $W_0^{1, q}(\Omega)$ to $\psi_0$, with $q > p$ (see \cite{b-ann}, \cite{BoccardoMurat}).\end{enumerate}
\end{prop}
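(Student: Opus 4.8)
The plan is to verify, for each of the five sufficient conditions, the two requirements \ref{Mosco1} and \ref{Mosco2} of Definition \ref{dhef:mosco}. I will dispatch \ref{Mosco1} once and for all, because in every case one has $\psi_n\to\psi_0$ in $L^p(\Omega)$: this is immediate in case (2); in cases (1) and (3) it follows from the weak $W^{1,p}_0(\Omega)$ convergence via Rellich's theorem ($\Omega$ bounded); in case (4) from the $L^\infty$ convergence; and in case (5) from the weak $W^{1,q}_0(\Omega)$ convergence, since Rellich gives $\psi_n\to\psi_0$ in $L^q(\Omega)\subseteq L^p(\Omega)$. Granting this, if $v_{n_i}\deb v_0$ in $W^{1,p}_0(\Omega)$ with $v_{n_i}\in\mathcal C(\psi_{n_i})$, then by Rellich $v_{n_i}\to v_0$ in $L^p(\Omega)$, so along a further subsequence $v_{n_i}\to v_0$ and $\psi_{n_i}\to\psi_0$ a.e.; passing to the limit in $v_{n_i}\ge\psi_{n_i}$ yields $v_0\ge\psi_0$ a.e., that is $v_0\in\mathcal C(\psi_0)$. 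What remains in each case is to produce, for a given $v_0\in\mathcal C(\psi_0)$, a recovery sequence $v_n\in\mathcal C(\psi_n)$ with $v_n\to v_0$ in $W^{1,p}_0(\Omega)$.

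In the three ``soft'' cases the recovery sequence is explicit. In case (1), $\psi_n\le\psi_0$ gives $\mathcal C(\psi_0)\subseteq\mathcal C(\psi_n)$, so $v_n:=v_0$ works. In case (2) set $v_n:=v_0+(\psi_n-\psi_0)^+$: one checks $v_n\ge\psi_n$ separately on $\{\psi_n>\psi_0\}$ (using $v_0\ge\psi_0$) and on its complement (where $v_n=v_0\ge\psi_0\ge\psi_n$), while $\|v_n-v_0\|_{W^{1,p}_0}=\|(\psi_n-\psi_0)^+\|_{W^{1,p}_0}\to0$ follows from the pointwise bounds $(\psi_n-\psi_0)^+\le|\psi_n-\psi_0|$ and $|D(\psi_n-\psi_0)^+|\le|D(\psi_n-\psi_0)|$ together with $\psi_n\to\psi_0$ in $W^{1,p}_0(\Omega)$. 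In case (4), put $\varepsilon_n:=\|(\psi_n-\psi_0)^+\|_{L^\infty(\Omega)}\to0$ and
\[
v_n:=\max\!\bigl(v_0,\ \min(\Psi,\,v_0+\varepsilon_n)\bigr);
\]
since $\min(\Psi,v_0+\varepsilon_n)=\Psi-(\Psi-v_0-\varepsilon_n)^+$ and $(g-\varepsilon_n)^+\in W^{1,p}_0(\Omega)$ whenever $g\in W^{1,p}_0(\Omega)$ and $\varepsilon_n>0$, one gets $v_n\in W^{1,p}_0(\Omega)$; moreover $v_n\ge\min(\Psi,v_0+\varepsilon_n)\ge\psi_n$ because $\Psi\ge\psi_n$ and $v_0+\varepsilon_n\ge\psi_0+\varepsilon_n\ge\psi_n$; and $v_n-v_0=\min\!\bigl((\Psi-v_0)^+,\varepsilon_n\bigr)\to0$ in $W^{1,p}_0(\Omega)$ by dominated convergence, since $D\min((\Psi-v_0)^+,\varepsilon_n)=D(\Psi-v_0)^+\,\chi_{\{(\Psi-v_0)^+<\varepsilon_n\}}$ tends to $0$ a.e. as $n\to\infty$ and is dominated by $|D(\Psi-v_0)^+|\in L^p(\Omega)$.

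The remaining two cases are the substance of the proposition: only weak convergence of the obstacles is available, so the natural recovery sequence converges only weakly, and promoting this to strong $W^{1,p}_0$ convergence is the real point. In case (3) take $v_n:=\max(v_0,\psi_n)=v_0+z_n$ with $z_n:=(\psi_n-v_0)^+\ge0$; then $v_n\ge\psi_n$, and since $z_n\le(\psi_n-\psi_0)^+\to0$ in $L^p(\Omega)$ while $z_n$ is bounded in $W^{1,p}_0(\Omega)$, we have $z_n\deb0$. To upgrade, test the relation $A(\psi_n)\le T$ against $z_n\ge0$, obtaining $\langle A(\psi_n),z_n\rangle\le\langle T,z_n\rangle\to0$. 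Because $D\psi_n=Dv_0+Dz_n$ on $\{z_n>0\}$ and $Dz_n=0$ elsewhere, one has the identity $\langle A(\psi_n),z_n\rangle=\langle A(v_0+z_n),z_n\rangle$; hence $\limsup_n\langle A(v_0+z_n),(v_0+z_n)-v_0\rangle\le0$ with $v_0+z_n\deb v_0$, and the classical Leray--Lions strong-monotonicity lemma (valid for the generic Leray--Lions operator $A$ of case (3), with coercivity, growth and strict monotonicity) gives $v_0+z_n\to v_0$ in $W^{1,p}_0(\Omega)$, i.e. $v_n\to v_0$.

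Case (5) is the step I expect to be the hardest. Here $v_n=\max(v_0,\psi_n)$ genuinely fails: if $\psi_0=v_0=0$ and $\psi_n$ is a height-$1/n$ sawtooth with $|D\psi_n|$ of order $1$, then $\{\psi_n\}_n$ is bounded in $W^{1,q}_0(\Omega)$ and converges weakly to $0$, yet $(\psi_n-v_0)^+=\psi_n\not\to0$ in $W^{1,p}_0(\Omega)$ — although a recovery sequence does exist (the ``flat envelope'' lying above $\psi_n$). The construction, which I would borrow from \cite{b-ann} and \cite{BoccardoMurat}, proceeds by first reducing, via a density and diagonal argument, to the case $v_0\in\mathcal C(\psi_0)\cap W^{1,q}_0(\Omega)$ (write $v_0=\psi_0+g$ with $g\ge0$ and approximate $g$ in $W^{1,p}_0(\Omega)$ by nonnegative functions of $W^{1,q}_0(\Omega)$), and then replaces the obstacles by a balayage-type regularization before taking the maximum with $v_0$; strong convergence of the resulting sequence in $W^{1,p}_0(\Omega)$ is then obtained by exploiting the strict inequality $q>p$ through a H\"older estimate on the gradients. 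I refer to those papers for the details.
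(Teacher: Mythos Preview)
The paper does not give its own proof of this proposition: it is stated as a known result, with a global reference to \cite{b-80}, Proposition~3, and with the individual citations embedded in the statement (to \cite{mosco}, \cite{BoccardoMurat}, \cite{b-ann}) for cases (2), (4) and (5). So there is no ``paper's proof'' to compare against; you have in fact supplied more than the paper does.

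Your treatment of condition~\ref{Mosco1} is correct and uniform across all five cases. For the recovery sequences, your arguments in cases (1), (2) and (4) are clean and correct; in particular the construction $v_n=\max\bigl(v_0,\min(\Psi,v_0+\varepsilon_n)\bigr)$ in case (4) is a nice explicit variant of the argument in \cite{BoccardoMurat}, and the dominated-convergence step for $D\min((\Psi-v_0)^+,\varepsilon_n)$ is justified exactly as you say. In case (3) your identity $\langle A(\psi_n),z_n\rangle=\langle A(v_0+z_n),z_n\rangle$ is correct (since $Dz_n=0$ a.e.\ on $\{z_n=0\}$ and $\psi_n=v_0+z_n$ on $\{z_n>0\}$); from $\limsup_n\langle A(v_0+z_n),z_n\rangle\le0$ one subtracts $\int_\Omega a(x,Dv_0)\cdot Dz_n\to0$ to get $\int_\Omega[a(x,D(v_0+z_n))-a(x,Dv_0)]\cdot Dz_n\to0$, and then the standard Leray--Lions argument (exactly Steps~2--3 of the proof of Theorem~\ref{Theoprincipale} in this paper) yields $z_n\to0$ strongly. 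For case (5) you honestly identify the genuine difficulty, give a correct counterexample showing $\max(v_0,\psi_n)$ fails, outline the reduction-and-regularization strategy, and defer to \cite{b-ann}, \cite{BoccardoMurat} --- which is precisely what the paper itself does.
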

\noindent
Furthermore, the paper \cite{DalMasoCondMosco} establishes some necessary and sufficient conditions for the Mosco-convergence of sequences of obstacle-type convex sets. 
These conditions are expressed in terms of the properties of the \( p \)-capacities of the level sets of the obstacles.
\noindent 

%We now introduce the main problem: let \( \psi_n,\, \psi_0 \in W^{1, p}(\Omega) \cap L^\infty(\Omega) \). From the argument in the previous section, we know that there exist solutions \( u_n \) and \( u_0 \) to the following variational problems:
%
%\begin{equation}
%\left\{
%\begin{aligned}\label{Mosco:problema:psin}
%    &u_n \in W_{0}^{1, p}(\Omega) \cap L^{\infty}(\Omega), \quad u_n \geq \psi_n \quad \text{in } \Omega, \\
%    &\langle A(u_n), v - u_n \rangle + \int_\Omega a_0(x,u_n)(v - u_n) + \int_{\Omega} H(x, u_n, D u_n)(v - u_n) \,   \geq 0, \\
%    &\forall\; v \in W_{0}^{1, p}(\Omega) \cap L^{\infty}(\Omega), \quad v \geq \psi_n \quad \text{in } \Omega,
%\end{aligned}
%\right.
%\end{equation}
%
%and
%
%\begin{equation}\label{Mosco:problema:psi0}
%\left\{
%\begin{aligned}
%    &u_0 \in W_{0}^{1, p}(\Omega) \cap L^{\infty}(\Omega), \quad u_0 \geq \psi_0 \quad \text{in } \Omega, \\
%    &\langle A(u_0), v - u_0 \rangle + \int_\Omega a_0(x,u_0)(v - u_0) + \int_{\Omega} H(x, u_0, D u_0)(v - u_0) \,   \geq 0, \\
%    &\forall\; v \in W_{0}^{1, p}(\Omega) \cap L^{\infty}(\Omega), \quad v \geq \psi_0 \quad \text{in } \Omega.
%\end{aligned}
%\right.
%\end{equation}

\section{Variational inequalities with a natural growth term}\label{Sect:ripetistime}
In this section we follow the proof of Theorem \ref{main_BoMuPu} presented in \cite{BoccardoMuratPuel} to gain quantitative estimates on the solutions $u$ of
\begin{equation}\label{prob_main}
	\left\{
	\begin{aligned}
		& u \in\mathcal{C}(\psi) \cap L^\infty(\Omega), \\
		& \langle A(u), v - u \rangle + \int_\Omega a_0(x,u)(v-u) + \int_\Omega H(x,u,Du)(v - u) \,   \geq 0, \\
		& \forall\; v \in \mathcal{C}(\psi) \cap L^\infty(\Omega),
	\end{aligned}
	\right.
\end{equation}
where \begin{equation}\label{Convessi_def}
\mathcal{C}= \mathcal{C}(\psi) = \left\{ v \in W_0^{1,p}(\Omega) : v \geq \psi \text{ a.e. in } \Omega \right\}.
\end{equation}
First, note that the solution is not unique in general. So, when we talk about \textit{the} solution, we mean \textit{one} of the possible solutions. 
It is also important to emphasize that the existence of a solution to this problem strongly relies on the obstacle-type structure of these convex sets, and thus this assumption cannot be relaxed.

In the estimates below, for the solution \( u \) of \eqref{prob_main} we explicitly state the dependence on \( \psi \). The goal is to use these estimates to study the stability of the solutions to \eqref{prob_main}, where \( \psi \) is replaced by a sequence \( \{\psi_n\}_n \).

The strategy to obtain estimates for the solution $u$ of \eqref{prob_main} relies on considering a sequence of approximating problems of \eqref{prob_main}, for which the existence of solutions \(u_j\) is known from classical theorems by Brezis (cf. \cite{lions,Bre2}). The next step is to derive an estimate for the sequence \(\{u_j\}_j\) in both \(L^\infty(\Omega)\) and \(W_0^{1,p}(\Omega)\). Thanks to these estimates, we can prove that the sequence \(\{u_j\}_j\) converges to \(u\), the solution to \eqref{prob_main}, which inherits the same bounds.

Consider then the following approximating problems:
 \begin{equation}\label{approximate_system}
    \left\{
    \begin{aligned}
    & u_j \in \mathcal{C}(\psi) \cap L^\infty(\Omega),  \\
    &\begin{aligned}
        \langle A(u_j), v - u_j \rangle + \int_\Omega & a_0(x,u_j)(v-u_j)\\ & + \int_\Omega H_j(x,u_j,Du_j)(v - u_j) \,  \geq 0,
    \end{aligned} \\
        & \forall\; v \in \mathcal{C}(\psi) \cap L^\infty(\Omega),
    \end{aligned}
    \right.
    \end{equation}
where
\begin{equation}\label{dhef:Hj}
H_{j}(x, s, \xi) = \frac{H(x, s, \xi)}{1 + \frac{1}{j} |H(x, s, \xi)|},\quad \forall\;\,j\in \N.
\end{equation}
Recall that, since $H_{j}$ is bounded, for every fixed $j\in \N$ there exists at least one solution $u_{j}$ to the system \eqref{approximate_system} (cf. \cite{Lions}). Furthermore, for every $j\in \N$ there exists a constant $\tilde C_j$ such that 
\[
\norma{u_j}{\elleom\infty}\leq \tilde C_j
\]
(cf. \cite{Bre2}).
%$u_{j}$ belongs to $L^{\infty}(\Omega)$ .
The next Lemma improves this estimate, removing the dependence from $j$ and giving a   bound on the sequence $\{u_j\}_j$ in $\elleom\infty$.

%Following the proof in \cite{BoccardoMuratPuel}, it is possible to derive a uniform bound with respect to \( j \), improving the estimate. In this paper, we provide a more precise version of this estimate, explicitly highlighting the contributions of the various parameters of the problem. Particular attention is given to the role played by the convex set associated with the obstacle \( \psi \), with the specific aim of using these estimates, valid for the solution to the problem \eqref{prob_main}, to study the stability of the solutions to \eqref{prob_main} when the convex set varies under Mosco-convergence.

\begin{lemma}\label{LEMMA:disug:stimainfty}
There exists a positive constant $C_\infty=C_\infty\bigg(\norma{f}{\elleom\infty},\alpha_0,p,\norma{\psi}{\elleom\infty}\bigg)$ such that, for every $j\in\N$, the solutions $u_j$ of \eqref{approximate_system} satisfy the estimate

\begin{equation}\label{stima:infinity_bounded}
\left\|u_{j}\right\|_{L^{\infty}(\Omega)} \leq C_\infty.
\end{equation}
More precisely, $C_\infty$ is defined as
\begin{equation}\label{Def:C_infty}
   C_\infty:= \max\left\{\left(\frac{\norma{f}{\elleom\infty}}{\alpha_{0}}\right)^{1 /(p-1)}, \,\norma{\psi}{\elleom\infty}\right\}.
\end{equation}
\end{lemma}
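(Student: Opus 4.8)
\medskip
\noindent\textbf{Proof plan.}\
The plan is to argue by contradiction with a Stampacchia-type truncation, using the already available (but $j$-dependent) bound $\|u_j\|_{L^\infty(\Omega)}\le\tilde C_j$ recalled above. First note that the lower bound is immediate: since $u_j\ge\psi$ a.e. and $\psi\ge-\|\psi\|_{L^\infty(\Omega)}$ a.e., we get $u_j\ge-\|\psi\|_{L^\infty(\Omega)}\ge-C_\infty$ a.e. So it is enough to bound $u_j$ from above. Let $M_j$ denote the essential supremum of $u_j$ over $\Omega$, which is finite, and suppose for contradiction that $M_j>C_\infty=\max\{(\|f\|_{L^\infty(\Omega)}/\alpha_0)^{1/(p-1)},\ \|\psi\|_{L^\infty(\Omega)}\}$; in particular $M_j>\|\psi\|_{L^\infty(\Omega)}\ge0$.

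For any level $k$ with $\|\psi\|_{L^\infty(\Omega)}\le k<M_j$, the function $v:=\min(u_j,k)=u_j-(u_j-k)^+$ belongs to $\mathcal C(\psi)\cap L^\infty(\Omega)$: it is bounded, lies in $W_0^{1,p}(\Omega)$, and $v\ge\psi$ because $u_j\ge\psi$ and $k\ge\|\psi\|_{L^\infty(\Omega)}\ge\psi$ a.e. Plugging this $v$ into \eqref{approximate_system} and writing $A_k:=\{u_j>k\}$ gives
\[
\langle A(u_j),(u_j-k)^+\rangle+\int_\Omega a_0(x,u_j)(u_j-k)^+ +\int_\Omega H_j(x,u_j,Du_j)(u_j-k)^+\le 0 .
\]
Now I would estimate the three terms. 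Since $D(u_j-k)^+$ equals $Du_j$ a.e. on $A_k$ and vanishes elsewhere, coercivity \eqref{prop:coercivity} gives $\langle A(u_j),(u_j-k)^+\rangle\ge\alpha\int_{A_k}|Du_j|^p$. On $A_k$ one has $u_j>k\ge0$, so \eqref{prop:a_0} yields $a_0(x,u_j)\ge\alpha_0 u_j^{p-1}\ge\alpha_0 k^{p-1}$, hence $\int_\Omega a_0(x,u_j)(u_j-k)^+\ge\alpha_0 k^{p-1}\int_{A_k}(u_j-k)$. Finally, $|H_j|\le|H|\le f+b(|u_j|)|Du_j|^p$ by \eqref{dhef:Hj} and \eqref{H_boundness}, and on $A_k$ we have $0\le u_j-k\le M_j-k$ and $b(|u_j|)\le b(M_j)$ because $b$ is increasing; therefore the third term is $\ge -\|f\|_{L^\infty(\Omega)}\int_{A_k}(u_j-k)-b(M_j)(M_j-k)\int_{A_k}|Du_j|^p$. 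Combining,
\[
\bigl(\alpha-b(M_j)(M_j-k)\bigr)\int_{A_k}|Du_j|^p+\bigl(\alpha_0 k^{p-1}-\|f\|_{L^\infty(\Omega)}\bigr)\int_{A_k}(u_j-k)\le 0 .
\]

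The last step is to choose $k$ carefully: since $M_j>C_\infty$ and $0<b(M_j)<\infty$, pick $k$ with $\max\{C_\infty,\ M_j-\tfrac{\alpha}{2b(M_j)}\}<k<M_j$. Then $\alpha-b(M_j)(M_j-k)>\alpha/2>0$ and, as $k>C_\infty\ge(\|f\|_{L^\infty(\Omega)}/\alpha_0)^{1/(p-1)}$, also $\alpha_0 k^{p-1}-\|f\|_{L^\infty(\Omega)}>0$. Since both integrals are nonnegative, the displayed inequality forces $\int_{A_k}(u_j-k)=0$, i.e. $u_j\le k$ a.e., contradicting $k<M_j=\operatorname{ess\,sup}_\Omega u_j$ (which makes $|A_k|>0$). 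Hence $M_j\le C_\infty$, and together with the lower bound we conclude $\|u_j\|_{L^\infty(\Omega)}\le C_\infty$ with $C_\infty$ as in \eqref{Def:C_infty}. The delicate point is the gradient part $b(|u_j|)|Du_j|^p$ of $H_j$: it is absorbed into the coercivity term by replacing $b(|u_j|)$ with $b(M_j)$ on the superlevel set and taking $k$ so close to $M_j$ that $b(M_j)(M_j-k)<\alpha/2$; although this choice of $k$ depends on $j$, the resulting bound $C_\infty$ does not, which is exactly what makes the estimate uniform in $j$.
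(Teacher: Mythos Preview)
Your argument is correct. Both the lower and the upper bound go through: the lower bound $u_j\ge\psi\ge-\|\psi\|_{L^\infty(\Omega)}\ge-C_\infty$ is immediate from the obstacle constraint, and the upper bound follows from your truncation argument, where the choice of $k$ close to $M_j$ absorbs the natural-growth contribution $b(M_j)(M_j-k)\int_{A_k}|Du_j|^p$ into the coercive term.

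However, your route is genuinely different from the paper's. The paper does \emph{not} use a Stampacchia truncation; instead it relies on the exponential test function $\varphi_\lambda(t)=t\,e^{\lambda t^2}$, whose key algebraic property $\alpha\varphi_\lambda'(t)-c|\varphi_\lambda(t)|\ge\alpha/2$ (for $\lambda=c^2/4\alpha^2$) absorbs the $|Du_j|^p$ term pointwise rather than through a smallness factor $(M_j-k)$. For the lower bound the paper tests with $v_j=u_j+\varphi_{\lambda_j}(z_j^-)$, $z_j=u_j+(\|f\|_{L^\infty}/\alpha_0)^{1/(p-1)}$, and obtains the sharper (obstacle-independent) inequality $u_j\ge-(\|f\|_{L^\infty}/\alpha_0)^{1/(p-1)}$; for the upper bound it tests with $v_j=u_j-\delta_j\varphi_{\lambda_j}((u_j-C_\infty)^+)$ with a suitable $\delta_j$ ensuring $v_j\ge\psi$. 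Your approach is more elementary and transparent for this particular lemma; the paper's exponential test function, on the other hand, is the workhorse of the whole article (it reappears in the $W^{1,p}_0$ estimate and in the stability proof), so there it pays to introduce it early. Both arguments, incidentally, rely on the qualitative a~priori bound $\|u_j\|_{L^\infty}\le\tilde C_j$: you use it to know $M_j<\infty$ and choose $k$ near $M_j$, while the paper uses it to make $C_j=b(\|u_j\|_{L^\infty})$ finite and hence $\lambda_j$ well defined.
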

\noindent
Before proving \Cref{LEMMA:disug:stimainfty}, we need the following Remark.

\begin{ohss}\label{prop:phi_lambda}
	Let $c$, $d$ and $\lambda$ be positive real numbers and \( \varphi_{\lambda} \) be the real function defined by
	\begin{equation}\label{dhef:phi}
		\varphi_{\lambda}(t) = t e^{\lambda t^{2}}.
	\end{equation}
	If $\lambda=c^{2} / 4 d^{2}$ the following inequality holds:
\noindent
\begin{equation}
d \varphi_{\lambda}^{\prime}(t)-c\left|\varphi_{\lambda}(t)\right| \geq d / 2, \quad \forall\; t \in \mathbb{R} .
\end{equation}
\end{ohss}

\begin{proof}[Proof of Lemma \ref{LEMMA:disug:stimainfty}] Consider the function \( v_{j} \) defined by
\begin{equation}
v_{j} = u_{j} + \varphi_{\lambda_j}\left(z_{j}^{-}\right) ,
\end{equation}
where, for any real value $z$, $z^-=\max\{-z\,;\,0\}$, and
$$
\lambda_{j} = \frac{C_{j}^{2}}{4 \alpha^{2}}, \quad C_{j} = b\left(\left\|u_{j}\right\|_{L^{\infty}(\Omega)}\right), \quad z_{j} = u_{j} + \left(\frac{\norma{f}{\elleom\infty}}{\alpha_{0}}\right)^{1 /(p-1)}.
$$
\noindent
    As we already remarked, \( u_{j} \in L^{\infty}(\Omega) \) for each \( j \in \mathbb{N} \), whence \( v_{j} \) belongs to \( \mathcal{C}(\psi) \cap \elleom\infty \) (recall that \( \alpha, \alpha_0 > 0 \)) and can thus be used as a test function in \eqref{approximate_system} to get
    \begin{align*}
\int_{\Omega} a\left(x, u_{j}, D u_{j}\right) D z_{j}^{-} \varphi_{\lambda_{j}}^{\prime}\left({z}_{j}^{-}\right) &+ \int_{\Omega} a_0\left(x, u_{j}\right) \varphi_{\lambda_{j}}\left(z_{j}^{-}\right) \\
&+ \int_{\Omega} H(x,u_j,Du_j) \varphi_{\lambda_{j}}\left(z_{j}^{-}\right) \geq 0.
    \end{align*}
Using \eqref{H_boundness}, we get
\begin{align*}
\int_{\Omega} a\left(x, u_{j}, D u_{j}\right) D z_{j}^{-} \varphi_{\lambda_{j}}^{\prime}\left({z}_{j}^{-}\right) + &\int_{\Omega} a_0\left(x, u_{j}\right) \varphi_{\lambda_{j}}\left(z_{j}^{-}\right) \\
+& \int_{\Omega}\left(\norma{f}{\elleom\infty} + C_{j}\left|D u_{j}\right|^{p}\right) \varphi_{\lambda_{j}}\left(z_{j}^{-}\right) \geq 0,
\end{align*}
which, since $Dz_j=Du_j$ and $\varphi_{\lambda_j}(z_j^-)=0$ a.e. where $z_j\geq0$, \begin{align*}
\int_{\Omega} a\left(x, u_{j}, D z_{j}\right) D z_{j}^{-} \varphi_{\lambda_{j}}^{\prime}\left(z_{j}^{-}\right) +& \int_{\Omega} a_0\left(x, u_{j}\right) \varphi_{\lambda_{j}}\left(z_{j}^{-}\right) \\
+& \int_{\Omega}\left(\norma{f}{\elleom\infty} + C_{j}\left|D z_{j}^{-}\right|^{p}\right) \varphi_{\lambda_{j}}\left(z_{j}^{-}\right) \geq 0.
\end{align*}
This, using \eqref{prop:coercivity} and rearranging terms, leads to
\begin{equation}
\begin{split}
    \int_{\Omega}\left\{\alpha \varphi_{\lambda_{j}}^{\prime}\left(z_{j}^{-}\right) - C_{j} \varphi_{\lambda_{j}}\left(z_{j}^{-}\right)\right\}&\left|D z_{j}^{-}\right|^{p} \\\leq &\int_{\Omega}\left[a_0\left(x, u_{j}\right) + \norma{f}{\elleom\infty}\right] \varphi_{\lambda_{j}}\left(z_{j}^{-}\right).    
\end{split}
\end{equation}
Note that the previous integrals are non-zero only in the set \[ \left\{x \in \Omega:\, u_{j} \leq - \left(\frac{\norma{f}{\elleom\infty}}{\alpha_{0}}\right)^{1 /(p-1)}\right\}, \]
where we have, by \eqref{prop:a_0},
$$
a_{0}\left(x,u_{j}\right) \leq \alpha_{0}\left|u_{j}\right|^{p-2} u_{j} \leq -\norma{f}{\elleom\infty}.
$$
In particular, we have
\begin{equation}
    \int_{\Omega}\left\{\alpha \varphi_{\lambda_{j}}^{\prime}\left(z_{j}^{-}\right) - C_{j} \varphi_{\lambda_{j}}\left(z_{j}^{-}\right)\right\}\left|D z_{j}^{-}\right|^{p} \leq 0.
\end{equation}
It follows (using Remark \ref{prop:phi_lambda} with $d=\alpha$ and $c=C_j$) that we have
$$
\alpha\int_\Omega \left|D z_{j}^{-}\right|^{p} \leq 0,
$$
which implies
$$
z_{j}^{-} = 0.
$$
In conclusion, we have
\begin{equation}\label{stima_dal_basso}
u_{j} \geq -\left(\frac{\norma{f}{\elleom\infty}}{\alpha_{0}}\right)^{1 /(p-1)}.
\end{equation}
\noindent
We now consider the test function \( v_{j} \) defined by
\begin{equation}
v_{j} = u_{j} - \delta_{j} \varphi_{\lambda_{j}}\left(z_{j}^{+}\right),
\end{equation}
where
$$
\begin{aligned}
&
z_{j} = u_{j} - C_{\infty}, \quad C_{\infty} = \max \left\{\left(\frac{\norma{f}{\elleom\infty}}{\alpha_{0}}\right)^{1 /(p-1)} \,;\, \,\norma{\psi}{\elleom\infty}\right\}, \\
& \delta_{j} = e^{ \left\{-\lambda_{j} \|u_{j} \|_{L^{\infty}(\Omega)}^{2}\right\}}\quad \text{and}\quad  z^+=\max\{z\,;\,0\}.
\end{aligned}
$$
Note that \( 0 \leq \delta_{j} \varphi_{\lambda_{j}}\left(z_{j}^{+}\right) \leq z_{j}^{+} \)
and \( v_{j} \in \mathcal{C}(\psi) \).
It follows that, substituting into the inequality \eqref{approximate_system} and using the bound for the function $H$ \eqref{H_boundness}, we have
\begin{equation*}
    \begin{split}
        \delta_{j} \int_{\Omega} a\left(x, u_{j}, D u_{j}\right) D z_{j}^{+} \varphi_{\lambda_{j}}^{\prime}\left(z_{j}^{+}\right) + \delta_{j} \int_{\Omega} a_{0}\left(x, u_{j}\right) \varphi_{\lambda_{j}}\left(z_{j}^{+}\right) \\
\leq \delta_{j} \int_{\Omega}\left(\norma{f}{\elleom\infty} + C_{j}\left|D u_{j}\right|^p\right) \varphi_{\lambda_{j}}\left(z_{j}^{+}\right).
    \end{split}
\end{equation*}
Using \eqref{prop:coercivity} (recall that $\varphi_\lambda'\geq0$), this implies
$$
\int_{\Omega}\left\{\alpha \varphi_{\lambda_{j}}^{\prime}\left(z_{j}^{+}\right) - C_{j} \varphi_{\lambda_{j}}\left(z_{j}^{+}\right)\right\}\left|D z_{j}^{+}\right|^{p} + \int_{\Omega}\left[a_{0}\left(x, u_{j}\right) - \norma{f}{\elleom\infty}\right] \varphi_{\lambda_{j}}\left(z_{j}^{+}\right) \leq 0.
$$
Recall that the previous integrals are non-zero only in \( \left\{x \in \Omega: u_{j} \geq C_{\infty}\right\}.\) In this set, by \eqref{prop:a_0}, we have
$$
a_{0}\left(x, u_{j}\right) \geq \alpha_{0}\left|u_{j}\right|^{p-2} u_{j} \geq \alpha_{0}\left(C_{\infty}\right)^{p-1} \geq \norma{f}{\elleom\infty}.
$$
In particular, we have
\begin{equation}
    \int_{\Omega}\left\{\alpha \varphi_{\lambda_{j}}^{\prime}\left(z_{j}^{+}\right) - C_{j} \varphi_{\lambda_{j}}\left(z_{j}^{+}\right)\right\}\left|D z_{j}^{+}\right|^{p}\leq 0.
\end{equation}
Once again, we apply Remark \ref{prop:phi_lambda} to conclude that \( z_{j}^{+} = 0 \), which implies
\begin{equation}\label{stima_dall_alto}
u_{j} \leq C_{\infty}.  
\end{equation}
\noindent
The inequalities \eqref{stima_dal_basso} and \eqref{stima_dall_alto} give the desired estimate \eqref{stima:infinity_bounded}.\\
\end{proof}
\noindent
The next lemma gives a uniform estimate on the norms of $\{u_j\}_j$ in $\Sobom p$.

\begin{lemma}\label{LEMMA:disug:stimaSob}
    There exists a positive constant $M=M(\alpha,p,\abs{\Omega},\beta)$ (depending only on the data in the assumptions) such that, for every  $\Psi\in\mathcal{C}(\psi)\cap L^\infty(\Omega)$,
   \begin{equation}\label{lem:W_boundness}
        \begin{split}
           \io\left|D u_{j}\right|^{p}
            \leq M(p,\alpha,|\Omega|,\beta)& \bigg[(1+\lambda \tilde C^2)e^{4\lambda \tilde C^2}\bigg]^p\bigg\{\norma{f}{\elleom\infty}+\norma{h}{\elleom{p'}}
            \\
             +\|D\Psi\|_{L^p(\Omega)}^p&+\|D\Psi\|_{L^p(\Omega)} \left(\norma{h}{\elleom{p'}}+C_\infty^{p-1}\right)+C_\infty^{p-1}\bigg\},
        \end{split}
    \end{equation}
\noindent
    with $C_\infty$ given by \eqref{Def:C_infty} and
    \begin{equation} 
    B_\infty=b(C_\infty), \quad \tilde C=C_\infty+\|\Psi\|_{L^\infty(\Omega)},\quad\text{and}\quad\lambda=\frac{B_\infty^{2}}{4 \alpha^{2}}.
    \end{equation}
    % \begin{equation}\label{lem:W_boundness}
    % \left\|u_{j}\right\|_{W_{0}^{1, p}(\Omega)} \leq C_{3} 
    % \end{equation}
\end{lemma}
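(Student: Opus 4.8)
The plan is to run the a priori estimate of \cite{BoccardoMuratPuel} for the bounded approximations $u_j$ of \eqref{approximate_system}, keeping all constants explicit in terms of the fixed comparison function $\Psi\in\mathcal{C}(\psi)\cap L^\infty(\Omega)$. Set $w_j:=u_j-\Psi\in W_0^{1,p}(\Omega)\cap L^\infty(\Omega)$. By Lemma \ref{LEMMA:disug:stimainfty} we have $\|u_j\|_{L^\infty(\Omega)}\le C_\infty$ for every $j$, hence $\|w_j\|_{L^\infty(\Omega)}\le C_\infty+\|\Psi\|_{L^\infty(\Omega)}=\tilde C$; moreover $b(|u_j|)\le b(C_\infty)=B_\infty$ and $|u_j|^{p-1}\le C_\infty^{p-1}$ a.e., so \eqref{H_boundness} gives the pointwise bound $|H_j(x,u_j,Du_j)|\le |H(x,u_j,Du_j)|\le f(x)+B_\infty|Du_j|^p$, with $f$ as in \eqref{H_boundness}. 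Let $\varphi_\lambda$ be as in \eqref{dhef:phi} with $\lambda=B_\infty^2/(4\alpha^2)$ and put $\delta:=e^{-\lambda\tilde C^2}\in(0,1]$.

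I would test \eqref{approximate_system} with $v_j:=u_j-\delta\,\varphi_\lambda(w_j)$. This is admissible: since $\varphi_\lambda$ is $C^1$ with $\varphi_\lambda(0)=0$ and bounded derivative on the bounded range of $w_j$, we have $v_j\in W_0^{1,p}(\Omega)\cap L^\infty(\Omega)$; and $v_j\ge\psi$, because on $\{w_j\le 0\}$ one has $\delta\varphi_\lambda(w_j)\le 0\le u_j-\psi$, while on $\{w_j>0\}$ the choice of $\delta$ gives $\delta\varphi_\lambda(w_j)=w_j e^{\lambda(w_j^2-\tilde C^2)}\le w_j\le u_j-\psi$, using $\Psi\ge\psi$ in the last step. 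Substituting $v_j-u_j=-\delta\varphi_\lambda(w_j)$ into \eqref{approximate_system} and dividing by $-\delta<0$, we obtain
\[
\langle A(u_j),\varphi_\lambda(w_j)\rangle+\int_\Omega a_0(x,u_j)\varphi_\lambda(w_j)+\int_\Omega H_j(x,u_j,Du_j)\varphi_\lambda(w_j)\le 0.
\]
Next, using $D\varphi_\lambda(w_j)=\varphi_\lambda'(w_j)Dw_j$ with $\varphi_\lambda'>0$ and $Du_j=Dw_j+D\Psi$, I would bound the principal part from below via coercivity \eqref{prop:coercivity} and the growth bound \eqref{prop:boundedness}, bound $a_0$ from below via \eqref{a_0_boundedness}, and bound the natural-growth term from below by $-\int_\Omega(f+B_\infty|Du_j|^p)|\varphi_\lambda(w_j)|$. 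Collecting the terms of order $|Du_j|^p$ leads to
\[
\int_\Omega\big[\alpha\varphi_\lambda'(w_j)-B_\infty|\varphi_\lambda(w_j)|\big]|Du_j|^p\ \le\ \int_\Omega\beta\big(h+C_\infty^{p-1}+|Du_j|^{p-1}\big)|D\Psi|\,\varphi_\lambda'(w_j)+\int_\Omega\big[\beta\big(h+C_\infty^{p-1}\big)+f\big]\,|\varphi_\lambda(w_j)|.
\]
By Remark \ref{prop:phi_lambda}, applied with $d=\alpha$ and $c=B_\infty$ (so that $\lambda=c^2/4d^2$), the bracket on the left is $\ge\alpha/2$, so the left-hand side dominates $\frac{\alpha}{2}\int_\Omega|Du_j|^p$.

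It remains to estimate the right-hand side uniformly in $j$. The only term of order $p$ in $Du_j$ is $\beta\int_\Omega|Du_j|^{p-1}|D\Psi|\,\varphi_\lambda'(w_j)$; Young's inequality splits it into $\frac{\alpha}{4}\int_\Omega|Du_j|^p$ (to be absorbed on the left) plus a term $\le C(\alpha,\beta,p)\int_\Omega|D\Psi|^p\varphi_\lambda'(w_j)^p$. For the remaining terms I would use $\|w_j\|_{L^\infty(\Omega)}\le\tilde C$, which gives $\varphi_\lambda'(w_j)\le(1+2\lambda\tilde C^2)e^{\lambda\tilde C^2}$ and $|\varphi_\lambda(w_j)|\le\tilde C e^{\lambda\tilde C^2}$, together with Hölder's inequality ($h\in L^{p'}(\Omega)$, $|D\Psi|\in L^p(\Omega)$, $|\Omega|<\infty$) for $\int_\Omega h|D\Psi|$, $\int_\Omega|D\Psi|$ and $\int_\Omega h$, and the bound $f\le\|f\|_{L^\infty(\Omega)}$. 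Collecting all contributions and absorbing the numerical powers of $\varphi_\lambda'(w_j)$ and $\varphi_\lambda(w_j)$ into the factor $\big[(1+\lambda\tilde C^2)e^{4\lambda\tilde C^2}\big]^p$ yields \eqref{lem:W_boundness}, with $M$ depending only on $\alpha,\beta,p,|\Omega|$.

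The crux — and the main obstacle if one approaches the problem afresh — is that the natural-growth term contributes $B_\infty\int_\Omega|Du_j|^p|\varphi_\lambda(w_j)|$, which is of the same order as the coercive term but carries the a priori large coefficient $B_\infty=b(C_\infty)$, so a plain linear test function cannot close the estimate; the remedy is the exponential weight $\varphi_\lambda$ tuned by $\lambda=B_\infty^2/(4\alpha^2)$, which through Remark \ref{prop:phi_lambda} makes $\alpha\varphi_\lambda'-B_\infty|\varphi_\lambda|\ge\alpha/2>0$. Beyond this, two technical points need care: the admissibility of $v_j$, which dictates the precise small factor $\delta=e^{-\lambda\tilde C^2}$ and which crucially uses the obstacle structure $\Psi\ge\psi$ together with the uniform $L^\infty$ bound of Lemma \ref{LEMMA:disug:stimainfty}; and the quantitative bookkeeping — keeping every inequality uniform in $j$ and tracking how $\|D\Psi\|_{L^p(\Omega)}$, $C_\infty$, $\|f\|_{L^\infty(\Omega)}$, $\|h\|_{L^{p'}(\Omega)}$ and the exponential weight enter — so as to land on the precise form \eqref{lem:W_boundness}.
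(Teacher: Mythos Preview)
Your proof is correct and follows essentially the same approach as the paper: test \eqref{approximate_system} with $v_j=u_j-\delta\,\varphi_\lambda(u_j-\Psi)$ (the paper writes this as the convex combination $[1-\delta e^{\lambda|u_j-\Psi|^2}]u_j+\delta e^{\lambda|u_j-\Psi|^2}\Psi$, which is the same function), invoke Remark~\ref{prop:phi_lambda} with $d=\alpha$, $c=B_\infty$ to absorb the natural-growth contribution, and then close with Young's and H\"older's inequalities together with the pointwise bounds on $\varphi_\lambda$ and $\varphi_\lambda'$ on $[-\tilde C,\tilde C]$. Your admissibility check for $v_j$ is spelled out more explicitly than in the paper, but otherwise the two arguments coincide.
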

\begin{proof}
    Let $C_\infty$ be  as in \eqref{Def:C_infty}.
Given $\Psi$ in $\mathcal{C}(\psi)\cap L^\infty(\Omega)$, we define \( v_{j} \) as
\begin{equation}
v_{j}=\left[1-\delta e^{\left\{\lambda\left|u_{j}-\Psi\right|^{2}\right\}}\right] u_{j}+\delta e^{\left\{\lambda\left|u_{j}-\Psi\right|^{2}\right\}} \Psi, 
\end{equation}
where
$$
\lambda=\frac{b(C_\infty)^{2}}{4 \alpha^{2}}, \quad \delta=e^{-\left\{\lambda\left(C_\infty+\|\Psi\|_{L^{\infty}(\Omega)}\right)^{2}\right\}}.
$$
Note that, by \Cref{LEMMA:disug:stimainfty} \( v_{j} \) belongs to \( \mathcal{C}(\psi)\cap L^\infty(\Omega) \). We can thus use it as a test function in \eqref{approximate_system}. Since $v_j-u_j=-\delta\varphi_\lambda(u_j-\Psi)$, we obtain
\begin{equation*}
    \begin{split}
        -\delta \int_{\Omega} a\left(x, u_{j}, D u_{j}\right) &D\left(u_{j} - \Psi\right) \varphi_{\lambda}^{\prime}\left(u_{j} - \Psi\right) \,  \\ 
        &- \delta \int_{\Omega} a_{0}\left(x, u_{j}\right) \varphi_{\lambda}\left(u_{j} - \Psi\right) \,  \\
        &-\delta \int_{\Omega} H_{j}\left(x, u_{j}, D u_{j}\right) \varphi_{\lambda}\left(u_{j} - \Psi\right) \,   \geq 0.
    \end{split}
\end{equation*}
Dividing by $\delta$ both sides and rearranging terms, we get
\begin{equation*}
    \begin{split}
    \int_{\Omega} a\left(x, u_{j}, D u_{j}\right) D u_{j} \varphi_{\lambda}^{\prime}\left(u_{j} - \Psi\right)
    \leq&\int_{\Omega} a\left(x, u_{j}, D u_{j}\right) D \Psi \,\varphi_{\lambda}^{\prime}\left(u_{j} - \Psi\right) \,  \\ 
        &-  \int_{\Omega} a_{0}\left(x, u_{j}\right) \varphi_{\lambda}\left(u_{j} - \Psi\right) \,  \\
        &- \int_{\Omega} H_{j}\left(x, u_{j}, D u_{j}\right) \varphi_{\lambda}\left(u_{j} - \Psi\right).
    \end{split}
\end{equation*}
Using \eqref{H_boundness} and \eqref{prop:coercivity}, we get
\[
\begin{aligned}
\alpha \int_{\Omega} \left|D u_{j}\right|^{p} \varphi_{\lambda}^{\prime}\left(u_{j} - \Psi\right) 
& \leq \int_{\Omega} \left|a_{0}\left(x, u_{j}\right)\right| \left|\varphi_{\lambda}\left(u_{j} - \Psi\right)\right| \\
& \quad + \int_{\Omega} a\left(x, u_{j}, D u_{j}\right) D \Psi \,\varphi_{\lambda}^{\prime}\left(u_{j} - \Psi\right) \\
& \quad + \int_{\Omega} \left(\norma{f}{\elleom\infty} + b(C_\infty) \left|D u_{j}\right|^{p}\right) \left|\varphi_{\lambda}\left(u_{j} - \Psi\right)\right|.
\end{aligned}
\]
That is,
\[
\begin{aligned}
\int_{\Omega} \left|D u_{j}\right|^{p} \bigg[\alpha\varphi_{\lambda}^{\prime}\left(u_{j} - \Psi\right) - & b(C_\infty)|\varphi_{\lambda}\left(u_{j} - \Psi\right)| \bigg]
\\\leq &
\int_{\Omega}\bigg[\left|a_{0}\left(x, u_{j}\right)\right| +\norma{f}{\elleom\infty}\bigg]\left|\varphi_{\lambda}\left(u_{j}-\Psi\right)\right|
\\&
+\int_{\Omega}\left|a\left(x, u_{j}, D u_{j}\right)\right||D \Psi|\left|\varphi_{\lambda}^{\prime}\left(u_{j}-\Psi\right)\right|.
\end{aligned}
\]
By \cref{prop:phi_lambda} (with $d=\alpha$ and $c=b(C_\infty)$) and the choice of \( \lambda \), we have
\begin{equation*}
    \begin{split}
        \frac\alpha2\int_{\Omega}\left|D u_{j}\right|^{p} \leq 
\int_{\Omega}\bigg[\left|a_{0}\left(x, u_{j}\right)\right|& +\norma{f}{\elleom\infty}\bigg]\left|\varphi_{\lambda}\left(u_{j}-\Psi\right)\right|\\
+\int_{\Omega}&\left|a\left(x, u_{j}, D u_{j}\right)\right||D \Psi|\left|\varphi_{\lambda}^{\prime}\left(u_{j}-\Psi\right)\right|,
    \end{split}
\end{equation*}
which implies, by \eqref{prop:boundedness}, \eqref{prop:a_0} and using the fact that $\varphi_\lambda'$ is increasing,
\begin{equation*}
    \begin{split}
        \frac\alpha2\int_{\Omega}\left|D u_{j}\right|^{p}&\leq \left(
 \beta\io\left(h(x)+|u_{j}|^{p-1}\right)
+|\Omega|\norma{f}{\elleom\infty}
\right)
\varphi_\lambda(C_\infty+\|\Psi\|_{L^\infty(\Omega)})
\\&\quad+\varphi_\lambda^\prime(C_\infty+\|\Psi\|_{L^\infty(\Omega)})
\beta\int_\Omega |D\Psi|\left(h(x)+|u_{j}|^{p-1}+|Du_{j}|^{p-1}
\right).
    \end{split}
\end{equation*}

% Thus, by H\"older's and Young's inequalities:\todo[inline]{Guarda il blu sotto}

% $$
% \frac{\alpha}{8} \int_{\Omega}\left|D u_{j}\right|^{p} \leq |\Omega|\left(C_\infty^{p-1}+C_0\right)\varphi_\lambda(2C_\infty)+\varphi_\lambda^\prime(2C_\infty)\|D\Psi\|_p
% \norma{h}{\elleom{p'}}
% +\varphi_\lambda^\prime(2C_\infty)c(p,\alpha)\|D\Psi\|_{L^p(\Omega)}^p.
% $$

% This, choosing \( \Psi \) instead of \( w \), leads to

% $$
% \frac{\alpha}{8} \int_{\Omega}\left|D u_{j}\right|^{p} \leq |\Omega|\left(C_\infty^{p-1}+C_0\right)\varphi_\lambda(2C_\infty)+\varphi_\lambda^\prime(2C_\infty)\|D\Psi\|_p
% \norma{h}{\elleom{p'}}
% \varphi_\lambda^\prime(2C_\infty) c(p,\alpha)\|D\Psi\|_{L^p(\Omega)}^p
% $$
\noindent
Thus, by H\"older's inequality and Lemma \ref{LEMMA:disug:stimainfty}
\[
\begin{split}
    \frac\alpha2\int_{\Omega}\left|D u_{j}\right|^{p}
    \leq 
    &\left[\io \beta \left(h(x)+C_\infty^{p-1}\right)
    +
    |\Omega|\norma{f}{\elleom\infty}
    \right]
    \varphi_\lambda(C_\infty+\|\Psi\|_{L^\infty(\Omega)})\\
    +&\varphi_\lambda^\prime(C_\infty+\|\Psi\|_{L^\infty(\Omega)})
    \beta\norma{D\Psi}{\elleom p}
    \left(\norma{h}{\elleom{p'}}+|\Omega|^\frac{1}{p^\prime}C_\infty^{p-1}\right)\\+
    &
    \varphi_\lambda^\prime(C_\infty+\|\Psi\|_{L^\infty(\Omega)})\beta\int_\Omega |D\Psi||Du_{j}|^{p-1}.
\end{split}
\]
Let $\tilde C=C_\infty+\|\Psi\|_{L^\infty(\Omega)}$. Note that an application of Young's inequality leads to 
\[
\varphi_\lambda^\prime(\tilde C)\beta |D\Psi||Du_{j}|^{p-1}\leq \frac{\alpha}{4}\abs{Du_j}^p+\tilde c(\alpha,p)\bigg[\varphi_\lambda^\prime(\tilde C)\beta |D\Psi|\bigg]^p,
\]
which implies (applying Young's inequality to the other terms) that there exists a positive constant $M=M(\alpha,p,\abs{\Omega},\beta)$ such that
\[\begin{split}
    \frac{\alpha}{4} \int_{\Omega}\left|D u_{j}\right|^{p} \leq & M(p,\alpha,|\Omega|,\beta)\left(C_\infty^{p-1}+\norma{f}{\elleom\infty}+\norma{h}{\elleom{p'}}\right)\varphi_\lambda(\tilde C)\\
    &+\beta\varphi_\lambda^\prime(\tilde C)\|D\Psi\|_p
     \left(\norma{h}{\elleom{p'}}+|\Omega|^\frac{1}{p^\prime}C_\infty^{p-1}\right)
    \\&+\bigg[\beta\varphi_\lambda^\prime(\tilde C)\bigg]^p\tilde c(p,\alpha)\|D\Psi\|_{L^p(\Omega)}^p.
\end{split}\]
\noindent
Now, since 
$$\varphi_\lambda(t)\leq\varphi^\prime_\lambda(t)=(1+2\lambda t^2) e^{\lambda t^2}\quad\text{and}\quad \varphi^\prime_\lambda(t)\geq1,$$
we obtain (possibly changing the constant $M$),
\[
\begin{split}
   \io\left|D u_{j}\right|^{p}
    \leq M(p,\alpha,|\Omega|,\beta)& \bigg[(1+\lambda \tilde C^2)e^{4\lambda \tilde C^2}\bigg]^p\bigg\{\norma{f}{\elleom\infty}+\norma{h}{\elleom{p'}}
    \\
     +\|D\Psi\|_{L^p(\Omega)}^p&+\|D\Psi\|_{L^p(\Omega)} \left(\norma{h}{\elleom{p'}}+C_\infty^{p-1}\right)+C_\infty^{p-1}\bigg\},
\end{split}
\]
which concludes the proof.
\end{proof}

From Lemma \ref{LEMMA:disug:stimainfty} and Lemma \ref{LEMMA:disug:stimaSob}, it follows that there exists \( u \in \mathcal{C}(\psi) \cap L^{\infty}(\Omega) \) and a subsequence (still denoted by \( u_{j} \)) such that:
\begin{equation}
    \begin{array}{ll}
u_{j} \overset{*}{\rightharpoonup} u &  \text{ in } L^{\infty}(\Omega) , \\
u_{j} \rightarrow u & \text{ a.e. in } \Omega, \\
u_{j} \rightharpoonup u & \text{in } W_{0}^{1, p}(\Omega).
\end{array}
\end{equation}
Here, $\rightharpoonup$ denotes the weak convergence and $\overset{*}{\rightharpoonup}$ denotes the weak* convergence in the corresponding spaces.
\noindent
At this point, the authors of \cite{BoccardoMuratPuel} show that it is possible to recover strong convergence in \( W^{1,p}_0(\Omega) \), up to subsequences, of \( \{u_j\}_j \) towards \( u \). By passing to the limit in the approximating problems \eqref{approximate_system}, it then becomes straightforward to conclude that \( u \) is a solution to \eqref{prob_main}. For the details of the proof, we refer to the aforementioned article, specifically Lemma~4, Lemma~5, and the proof of Theorem~1.

For the convenience of the reader, we state here the final result with the precise estimates obtained in  \Cref{LEMMA:disug:stimainfty} and \Cref{LEMMA:disug:stimaSob}.

\begin{lemma}\label{disug_strongcompactness}
The sequence \( \{u_{j}\}_j\) of solutions to \eqref{approximate_system} converges, up to subsequences, in \( W_{0}^{1, p}(\Omega) \). Moreover, if \( u \) is a limit point of \( \{u_{j}\}_j \), then it is a solution to \eqref{prob_main}, and the following estimate holds:
\begin{equation}\label{sol_u:infinity_bounded}
\|u\|_{L^{\infty}(\Omega)} \leq C_\infty, 
\end{equation}
with $C_\infty$ depending continuously (see \eqref{Def:C_infty}) on $\norma{f}{\elleom\infty}$, $\alpha_0$, $p$ and $\norma{\psi}{\elleom\infty}$.
\\
\noindent Moreover, there exists a positive constant \( M = M(\alpha, p, \abs{\Omega}, \beta) \) (depending only on the data in the assumptions) such that, given any \( \Psi \in \mathcal{C}(\psi)\cap L^\infty(\Omega)\),
\begin{equation}\label{sol_u:W_boundness}
    \begin{split}
           \io\left|D u_{j}\right|^{p}
            \leq M(p,\alpha,|\Omega|,\beta)& \bigg[(1+\lambda \tilde C^2)e^{4\lambda \tilde C^2}\bigg]^p\bigg\{\norma{f}{\elleom\infty}+\norma{h}{\elleom{p'}}
            \\
             +\|D\Psi\|_{L^p(\Omega)}^p&+\|D\Psi\|_{L^p(\Omega)} \left(\norma{h}{\elleom{p'}}+C_\infty^{p-1}\right)+C_\infty^{p-1}\bigg\},
        \end{split}
\end{equation}\noindent
 where$$
     \lambda=\frac{b(C_\infty)^2}{4 \alpha^{2}} \quad \text{and}\quad \tilde C=C_\infty+\|\Psi\|_{L^\infty(\Omega)}.
    $$
\end{lemma}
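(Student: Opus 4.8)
The plan is to combine the uniform, $j$-independent bounds of \Cref{LEMMA:disug:stimainfty} and \Cref{LEMMA:disug:stimaSob} with the compactness and limit-passage scheme of \cite{BoccardoMuratPuel}, and then to read the two quantitative estimates off by lower semicontinuity. First I would use \Cref{LEMMA:disug:stimainfty} to bound $\{u_j\}_j$ in $\elleom\infty$ by $C_\infty$, and \Cref{LEMMA:disug:stimaSob}, applied with any fixed admissible $\Psi\in\mathcal{C}(\psi)\cap\elleom\infty$, to bound $\{u_j\}_j$ in $\Sobom p$ by the finite right-hand side of \eqref{lem:W_boundness}. Extracting a subsequence gives $u_j\overset{*}{\rightharpoonup} u$ in $\elleom\infty$, $u_j\rightharpoonup u$ in $\Sobom p$ and $u_j\to u$ a.e.\ in $\Omega$; since $\mathcal{C}(\psi)$ is convex and closed, hence weakly closed, and the a.e.\ limit preserves the pointwise bounds, we get $u\in\mathcal{C}(\psi)\cap\elleom\infty$ with $\|u\|_{\elleom\infty}\le C_\infty$, which is already \eqref{sol_u:infinity_bounded}.

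The substantive step is the strong convergence of the gradients, and it is exactly the argument of \cite[Lemma~4 and Lemma~5]{BoccardoMuratPuel}. One tests \eqref{approximate_system} with functions of the form $u_j-\delta\,e^{\lambda|u_j-u|^2}(u_j-u)^{\pm}$, suitably truncated to remain admissible, where $\lambda$ is calibrated as in Remark~\ref{prop:phi_lambda} so that the exponential weight absorbs the dangerous term $b(|u_j|)\abs{Du_j}^p$ produced by the natural-growth bound \eqref{H_boundness}. Exploiting the coercivity \eqref{prop:coercivity} and the growth bound \eqref{prop:boundedness}, this leads to
\[
\limsup_{j\to\infty}\int_\Omega\left[a(x,u_j,Du_j)-a(x,u_j,Du)\right](Du_j-Du)\le 0,
\]
whence, by the strict monotonicity \eqref{prop:monotonicity} and the classical Leray--Lions lemma, $Du_j\to Du$ a.e.\ in $\Omega$ and $u_j\to u$ strongly in $\Sobom p$.

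With strong convergence in hand, $\abs{Du_j}^p\to\abs{Du}^p$ in $L^1(\Omega)$, so the sequence $H_j(x,u_j,Du_j)$ is equi-integrable (dominated by $f+b(C_\infty)\abs{Du_j}^p$, which converges in $L^1$) and converges pointwise to $H(x,u,Du)$, hence in $L^1(\Omega)$ by Vitali's theorem; moreover $a(x,u_j,Du_j)\to a(x,u,Du)$ in $L^{p'}(\Omega;\R^N)$ and $a_0(x,u_j)\to a_0(x,u)$ in $L^{p'}(\Omega)$. Fixing $v\in\mathcal{C}(\psi)\cap\elleom\infty$ and using that $v-u_j\to v-u$ in $\Sobom p$ while staying bounded in $\elleom\infty$, I can pass to the limit in \eqref{approximate_system} and obtain that $u$ solves \eqref{prob_main}. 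Finally, \eqref{sol_u:W_boundness} is simply \eqref{lem:W_boundness} read for $u$: its right-hand side is independent of $j$, so the bound is inherited along the subsequence — equivalently, weak (or now strong) lower semicontinuity of $v\mapsto\int_\Omega\abs{Dv}^p$ passes it to the limit.

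The main obstacle is the strong convergence of the gradients: it is precisely there that the natural growth of $H$ collides with the Leray--Lions monotonicity argument, and the exponential test functions of \cite{BoccardoMuratPuel}, tuned via the elementary inequality of Remark~\ref{prop:phi_lambda}, are what make it work. The rest is a routine assembly of Vitali's theorem, continuity of Nemytskii operators, and lower semicontinuity; since all of this is carried out in detail in \cite{BoccardoMuratPuel} (Lemma~4, Lemma~5 and the proof of Theorem~1), our actual task is only to keep track of the constants supplied by \Cref{LEMMA:disug:stimainfty} and \Cref{LEMMA:disug:stimaSob}.
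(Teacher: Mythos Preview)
Your proposal is correct and follows essentially the same approach as the paper: the paper does not give a self-contained proof of this lemma but simply combines the uniform bounds of \Cref{LEMMA:disug:stimainfty} and \Cref{LEMMA:disug:stimaSob} with the strong-compactness and limit-passage argument of \cite{BoccardoMuratPuel} (Lemma~4, Lemma~5, proof of Theorem~1), exactly as you outline. One small remark: the estimate \eqref{sol_u:W_boundness} in the statement is written for $u_j$, not for the limit $u$, so it is literally a restatement of \eqref{lem:W_boundness} and no lower-semicontinuity step is needed there; also, in this fixed-obstacle setting the test function is the single convex combination $v_j=u_j-\delta\,\varphi_\lambda(u_j-u)$ (with the weak limit $u\in\mathcal C(\psi)$ itself playing the role of the comparison function), so no separate treatment of $(u_j-u)^{\pm}$ or truncation is required.
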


\section{Stability under Mosco-convergence of obstacles}\label{Sect:Mosco}
The aim this section is the proof of the Theorem \ref{Theoprincipale}. From now on, we consider the sequence \( \{\psi_n\}_n \) and the function \( \psi_0 \) satisfying the assumptions of \Cref{Theoprincipale}.

\begin{ohss}
    Observe that in \Cref{Theoprincipale}, we require the sequence \(\{\psi_n\}_n\) to be bounded in \(L^\infty\). In view of point 3 of \Cref{ProbObstacleTypeMosco}, this can be seen as a natural requirement. However, this condition will be needed to apply the techniques from \cite{BoccardoMuratPuel}, since the boundedness results from the previous section, \Cref{LEMMA:disug:stimainfty} and \Cref{LEMMA:disug:stimaSob}, rely on the \(L^\infty\) norm of the obstacle \(\psi\).  
\end{ohss}
From the results obtained in the previous section and summarized in \Cref{disug_strongcompactness}, we derive the following conclusions.
\begin{lemma}\label{mosco:boundLinfty}
    The sequence $\{u_n\}_n$ satisfies the estimate
    \begin{equation}
        \left\|u_{n}\right\|_{L^{\infty}(\Omega)} \leq \max \left\{\left(\frac{\norma{f}{\elleom\infty}}{\alpha_{0}}\right)^{1 /(p-1)}\,; \,\norma{\psi_n}{\elleom\infty}\right\}.
    \end{equation}
    Hence, it is bounded in $\elleom\infty$:
    $$\left\|u_{n}\right\|_{L^{\infty}(\Omega)} \leq C_\infty.$$
    with $$
    C_\infty=C_\infty\left(\norma{f}{\elleom\infty}, \alpha_0, p,\, \sup_n\norma{\psi_n}{\elleom\infty}\right).
    $$
\end{lemma}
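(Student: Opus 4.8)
The statement is an immediate specialization of the estimates collected in \Cref{disug_strongcompactness} (which in turn rest on \Cref{LEMMA:disug:stimainfty}). The plan is the following. Fix $n\in\N$. The function $u_n$ is, by construction, a solution of \eqref{Mosco:problema:psin}, i.e.\ of \eqref{prob_main} with the obstacle $\psi$ replaced by $\psi_n$, obtained as a limit point of the approximating problems \eqref{approximate_system} associated with $\psi_n$. Hence \Cref{disug_strongcompactness}, applied verbatim with $\psi\rightsquigarrow\psi_n$, yields the bound \eqref{sol_u:infinity_bounded} with the constant $C_\infty$ from \eqref{Def:C_infty}, that is
\[
\|u_n\|_{L^\infty(\Omega)}\leq \max\left\{\left(\frac{\norma{f}{\elleom\infty}}{\alpha_0}\right)^{1/(p-1)}\,;\,\norma{\psi_n}{\elleom\infty}\right\}.
\]
This is exactly the first displayed inequality in the statement, and it holds for every $n$.

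For the second assertion, the key observation is simply that the constant in \eqref{Def:C_infty} depends on the obstacle only through its $L^\infty$ norm, monotonically. Therefore, bounding $\norma{\psi_n}{\elleom\infty}$ by $\sup_n\norma{\psi_n}{\elleom\infty}$ — which is finite by the standing assumption that $\{\psi_n\}_n$ is bounded in $L^\infty(\Omega)$ — gives the uniform estimate
\[
\|u_n\|_{L^\infty(\Omega)}\leq C_\infty:=\max\left\{\left(\frac{\norma{f}{\elleom\infty}}{\alpha_0}\right)^{1/(p-1)}\,;\,\sup_n\norma{\psi_n}{\elleom\infty}\right\},
\]
with $C_\infty=C_\infty\big(\norma{f}{\elleom\infty},\alpha_0,p,\sup_n\norma{\psi_n}{\elleom\infty}\big)$, as claimed.

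I do not expect any genuine obstacle here: the whole content of the lemma is that the $L^\infty$ bound of \Cref{LEMMA:disug:stimainfty} is \emph{quantitative} and involves the obstacle only via $\norma{\psi}{\elleom\infty}$, so it passes uniformly to a sequence of obstacles that is $L^\infty$-bounded. The only point worth stating carefully is the interpretation of ``the solution $u_n$'': since solutions of \eqref{Mosco:problema:psin} need not be unique, $u_n$ is to be understood as one obtained through the approximation scheme of Section \ref{Sect:ripetistime}, which is precisely the class of solutions to which \Cref{disug_strongcompactness} applies.
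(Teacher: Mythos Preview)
Your proposal is correct and matches the paper's own treatment: the paper presents \Cref{mosco:boundLinfty} as an immediate consequence of the quantitative estimates of \Cref{disug_strongcompactness} (and hence \Cref{LEMMA:disug:stimainfty}) applied with $\psi=\psi_n$, together with the $L^\infty$-boundedness of $\{\psi_n\}_n$. Your added remark that $u_n$ should be understood as a solution obtained via the approximation scheme is a fair clarification, consistent with how \Cref{disug_strongcompactness} is stated.
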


\begin{lemma}\label{mosco:boundSobolev}
    The sequence $\{u_n\}_n$ is bounded in $\Sobom p$.
\end{lemma}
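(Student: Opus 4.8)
The plan is to use the uniform $L^\infty$ bound from \Cref{mosco:boundLinfty} together with the quantitative Sobolev estimate \eqref{sol_u:W_boundness} from \Cref{disug_strongcompactness}, applied with a carefully chosen competitor $\Psi = \Psi_n \in \mathcal{C}(\psi_n) \cap L^\infty(\Omega)$. The point is that the right-hand side of \eqref{sol_u:W_boundness} depends on $n$ only through $C_\infty$ (already controlled uniformly, since $\sup_n \norma{\psi_n}{\elleom\infty} < \infty$) and through the quantities $\|D\Psi_n\|_{L^p(\Omega)}$ and $\|\Psi_n\|_{L^\infty(\Omega)}$ (the latter entering $\tilde C$ and hence the exponential factor $[(1+\lambda\tilde C^2)e^{4\lambda\tilde C^2}]^p$). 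So the whole lemma reduces to producing a sequence $\Psi_n \in \mathcal{C}(\psi_n) \cap L^\infty(\Omega)$ that is bounded in $W_0^{1,p}(\Omega)$ and bounded in $L^\infty(\Omega)$.

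First I would produce such a sequence from the Mosco-convergence hypothesis. Fix any $w_0 \in \mathcal{C}(\psi_0) \cap L^\infty(\Omega)$ (nonempty: e.g. $w_0 = \psi_0^+ \in W_0^{1,p}(\Omega) \cap L^\infty(\Omega)$, or just $\psi_0$ itself if $\psi_0 \geq 0$; in general one can take a suitable truncation). By property \ref{Mosco2} of Mosco-convergence there exist $w_n \in \mathcal{C}(\psi_n)$ with $\|w_n - w_0\|_{W_0^{1,p}(\Omega)} \to 0$; in particular $\{w_n\}_n$ is bounded in $W_0^{1,p}(\Omega)$. These $w_n$ need not be bounded in $L^\infty$, so I would set $\Psi_n := \max\{w_n, \psi_n^+\}$, or more simply truncate: since $w_n \geq \psi_n$ and $\{\psi_n\}_n$ is bounded in $L^\infty$, say $\|\psi_n\|_{L^\infty} \leq K$, the truncation $\Psi_n := \min\{w_n, \,\|w_0\|_{L^\infty(\Omega)} + K + 1\}$ — or, to be safe about staying above the obstacle, $\Psi_n := \max\{\min\{w_n, M\}, \psi_n\}$ with $M$ a fixed constant $\geq \sup_n \|\psi_n\|_{L^\infty}$ — still lies in $\mathcal{C}(\psi_n)$, is bounded in $L^\infty(\Omega)$ uniformly in $n$, and has $\|D\Psi_n\|_{L^p(\Omega)} \leq \|Dw_n\|_{L^p(\Omega)} + \|D\psi_n\|_{L^p(\Omega)}$, hence is bounded in $W_0^{1,p}(\Omega)$ provided $\{\psi_n\}_n$ is also bounded in $W_0^{1,p}(\Omega)$. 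Here I should note that $\{\psi_n\}_n$ being bounded in $W_0^{1,p}(\Omega)$ is already implied by the Mosco-convergence setup together with the $L^\infty$ bound — or one invokes that the $\psi_n$ are given as a convergent/bounded sequence; in any case one can instead simply use $\Psi_n := \max\{w_n, \psi_n\}$ and bound $\|D\max\{w_n,\psi_n\}\|_{L^p} \le \|Dw_n\|_{L^p} + \|D\psi_n\|_{L^p}$, so the only genuine requirement is the uniform $W^{1,p}_0$ bound on $\{\psi_n\}_n$, which I would take as part of the standing hypotheses (it follows from any of the conditions in \Cref{ProbObstacleTypeMosco} under which one works).

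With $\Psi_n$ in hand, I plug it into \eqref{sol_u:W_boundness}: writing $\tilde C_n = C_\infty + \|\Psi_n\|_{L^\infty(\Omega)} \leq C_\infty + C'$ for a constant $C'$ independent of $n$, and $\lambda = b(C_\infty)^2/(4\alpha^2)$ independent of $n$, the bracket $[(1+\lambda \tilde C_n^2)e^{4\lambda \tilde C_n^2}]^p$ is bounded by a constant independent of $n$; likewise $\|D\Psi_n\|_{L^p(\Omega)} \leq C''$ independent of $n$, and $\norma{f}{\elleom\infty}$, $\norma{h}{\elleom{p'}}$, $C_\infty$ are all fixed. Therefore the right-hand side of \eqref{sol_u:W_boundness} is bounded by a constant depending only on the data and on $\sup_n\norma{\psi_n}{\elleom\infty}$ and $\sup_n\|D\psi_n\|_{L^p(\Omega)}$, giving $\io |Du_n|^p \leq \text{const}$, i.e. $\{u_n\}_n$ is bounded in $W_0^{1,p}(\Omega)$.

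The main obstacle is the one already flagged: \eqref{sol_u:W_boundness} is only useful if the competitor $\Psi$ can be chosen with $L^\infty$ and $W^{1,p}_0$ norms controlled uniformly in $n$. Property \ref{Mosco2} gives a strongly convergent recovery sequence $w_n \to w_0$ which handles the $W^{1,p}_0$ side for free, but does not by itself give $L^\infty$ control of $w_n$; the fix is the truncation $\max\{\min\{w_n,M\},\psi_n\}$, whose $W^{1,p}_0$ norm is controlled by $\|Dw_n\|_{L^p} + \|D\psi_n\|_{L^p}$ — so everything hinges on the uniform $W^{1,p}_0$ boundedness of the obstacles $\{\psi_n\}_n$, which I would either add explicitly to the hypotheses of \Cref{Theoprincipale} (as is implicitly the case for the conditions of \Cref{ProbObstacleTypeMosco}) or derive from them. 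Once that is settled, the rest is bookkeeping with the explicit constant in \eqref{sol_u:W_boundness}.
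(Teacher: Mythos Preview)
Your overall strategy is exactly the paper's: combine the uniform $L^\infty$ bound from \Cref{mosco:boundLinfty} with the quantitative estimate \eqref{sol_u:W_boundness}, applied with a competitor $\Psi_n\in\mathcal C(\psi_n)\cap L^\infty(\Omega)$ obtained by truncating a Mosco recovery sequence $w_n\to w_0$.

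The gap is in your construction of $\Psi_n$. Your first choice, $\Psi_n=\min\{w_n,M\}$ with $M\geq K:=\sup_n\|\psi_n\|_{L^\infty(\Omega)}$, already works and you abandon it without cause. Since $\psi_n\leq K\leq M$, on the set $\{w_n>M\}$ one has $\min\{w_n,M\}=M\geq\psi_n$, and on $\{w_n\leq M\}$ one has $\min\{w_n,M\}=w_n\geq\psi_n$; hence $\Psi_n\in\mathcal C(\psi_n)$ with no need for the outer $\max$ with $\psi_n$. This $\Psi_n$ is bounded in $L^\infty(\Omega)$ (above by $M$, below by $\psi_n\geq -K$) and satisfies $\|D\Psi_n\|_{L^p(\Omega)}\leq\|Dw_n\|_{L^p(\Omega)}$, which is bounded because $w_n\to w_0$ strongly in $W^{1,p}_0(\Omega)$. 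No term $\|D\psi_n\|_{L^p(\Omega)}$ ever appears.

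By switching ``to be safe'' to $\max\{\min\{w_n,M\},\psi_n\}$ you artificially introduce the dependence on $\|D\psi_n\|_{L^p(\Omega)}$ and are then led to claim that uniform $W^{1,p}_0$-boundedness of $\{\psi_n\}$ must be added to the hypotheses of \Cref{Theoprincipale}. That is incorrect: the theorem assumes only $L^\infty$-boundedness of $\{\psi_n\}$ and Mosco-convergence, and the proof goes through as stated. The paper packages precisely this observation as \Cref{lem:recovery_inf}, using the two-sided truncation $\tilde w_n=T_{\lambda_n}(w_n)$ with $\lambda_n=\max\{\|\psi_n\|_{L^\infty(\Omega)},\|w_0\|_{L^\infty(\Omega)}\}$, which has the same effect.
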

The proof of \Cref{mosco:boundSobolev} is an immediate consequence of \Cref{mosco:boundLinfty} and \Cref{disug_strongcompactness}, in view of the following proposition.

\begin{prop}\label{lem:recovery_inf}
    Let $w_0\in \mathcal{C}(\psi_0)\cap\elleom\infty$ and let $w_n\in \mathcal{C}(\psi_n)$ be such that $w_n\to w_0$ in $\Sobom{p}$. Then there exists a sequence
    $\{\tilde w_n\}_n\subset \mathcal{C}(\psi_n)\cap\elleom\infty$ such that $\tilde w_n\to w_0$ in $\Sobom{p}.$ Moreover, if \( \{\psi_n\}_n \) is bounded in \( L^\infty(\Omega) \), so is \( \{\tilde 
w_n\}_n \).

\end{prop}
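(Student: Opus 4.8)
The goal is to prove Proposition~\ref{lem:recovery_inf}: given $w_0\in\mathcal C(\psi_0)\cap L^\infty(\Omega)$ and $w_n\in\mathcal C(\psi_n)$ with $w_n\to w_0$ in $W_0^{1,p}(\Omega)$, one has to produce $\tilde w_n\in\mathcal C(\psi_n)\cap L^\infty(\Omega)$ still converging strongly to $w_0$, with uniform $L^\infty$ bound when $\{\psi_n\}_n$ is bounded in $L^\infty$. The natural idea is \emph{truncation from above}: since $w_0$ is bounded, set $k:=\|w_0\|_{L^\infty(\Omega)}$ (or $k:=\sup_n\|\psi_n\|_{L^\infty(\Omega)}$ when that is finite, whichever is needed), and define
\[
\tilde w_n := \max\{\,\min\{w_n,k\},\,\psi_n\,\} = T_k(w_n)\vee\psi_n,
\]
or more simply $\tilde w_n:=\min\{w_n,\,k\}$ once one checks this still lies above $\psi_n$. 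The subtlety is that truncating $w_n$ at level $k$ could push it below $\psi_n$ on the set $\{\psi_n>k\}$, which need not be empty when $\psi_n$ is not controlled in $L^\infty$; taking the max with $\psi_n$ repairs this, and when $\{\psi_n\}$ is $L^\infty$-bounded one chooses $k\ge\sup_n\|\psi_n\|_\infty$ so that $\tilde w_n=\min\{w_n,k\}\in\mathcal C(\psi_n)$ automatically and $\|\tilde w_n\|_\infty\le k$.

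The plan in order is: (1) verify $\tilde w_n\in\mathcal C(\psi_n)\cap L^\infty(\Omega)$ — membership in $W_0^{1,p}(\Omega)$ follows since truncation and finite max/min of $W_0^{1,p}$ functions stay in $W_0^{1,p}$ (the obstacles $\psi_n\in W_0^{1,p}(\Omega)$), the constraint $\tilde w_n\ge\psi_n$ is immediate from the $\vee\psi_n$, and the $L^\infty$ bound is built in; (2) prove $\tilde w_n\to w_0$ in $W_0^{1,p}(\Omega)$. For step (2), write $\tilde w_n - w_0 = (T_k(w_n)\vee\psi_n) - w_0$ and compare with $T_k(w_0)\vee\psi_0 = w_0$ (using $\|w_0\|_\infty\le k$ and $w_0\ge\psi_0$). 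Since the maps $s\mapsto T_k(s)$ and $(s,t)\mapsto s\vee t$ are $1$-Lipschitz, one gets the pointwise/gradient bounds
\[
|\tilde w_n - w_0|\le |w_n-w_0| + |\psi_n-\psi_0|,\qquad
|D\tilde w_n - Dw_0|\le |Dw_n-Dw_0| + |D\psi_n - D\psi_0|
\]
a.e., \emph{provided one also knows $\psi_n\to\psi_0$ in $W_0^{1,p}$}; but Mosco-convergence only gives that \emph{some} recovery sequence converges, not $\psi_n$ itself. So the cleaner route is: do not reintroduce $\psi_0$; instead argue directly. Note $\tilde w_n = w_n$ on the set $\{w_n\le k\}\cap\{w_n\ge\psi_n\}=\{w_n\le k\}$, and off that set $|\tilde w_n|\le\max\{k,|\psi_n|\}$. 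The difference $\tilde w_n-w_0$ is supported, up to the part where $w_n$ already $\to w_0$, on $\{w_n>k\}\cup\{\psi_n>k\}$; one shows $|\{w_n>k\}|\to 0$ because $w_n\to w_0\le k$ in measure, and controls $\int_{\{w_n>k\}}|Dw_n|^p\to 0$ by equi-integrability coming from strong $W_0^{1,p}$ convergence of $w_n$.

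Concretely, the main estimate I would run is: on $\{w_n\le k\}\cap\{w_n\ge\psi_n\}$ we have $\tilde w_n=w_n$; since $w_n\to w_0$ in $W_0^{1,p}$ and $w_0\le k$ a.e., the ``bad set'' $B_n:=\{w_n>k\}\cup\{\psi_n>k\}$ has the property that $D\tilde w_n-Dw_n$ is supported in $B_n$ and bounded by $|Dw_n|+|D\psi_n|+|Dw_0|$ there; to make this go to zero in $L^p$ one needs $|B_n|\to0$ and equi-integrability of $\{|Dw_n|^p\}$ and $\{|D\psi_n|^p\}$. The first part, $|\{w_n>k\}|\to 0$, is clear from $w_n\to w_0$ in measure and $\{w_0>k\}$ null. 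Equi-integrability of $|Dw_n|^p$ is standard from strong $L^p$ convergence of $Dw_n$. The genuinely delicate point — and the one place the hypothesis $\{\psi_n\}$ bounded in $L^\infty$ earns its keep — is handling $\{\psi_n>k\}$ and $\int_{\{\psi_n>k\}}|D\psi_n|^p$: when $k\ge\sup_n\|\psi_n\|_\infty$ this set is \emph{empty}, which is exactly why in the bounded case one takes $k$ that large and the whole argument collapses to $\tilde w_n:=\min\{w_n,k\}$, which trivially lies in $\mathcal C(\psi_n)$, is bounded by $k$, and converges to $\min\{w_0,k\}=w_0$ in $W_0^{1,p}$ by the $1$-Lipschitz property of $T_k$ together with $\{w_0>k\}$ null and equi-integrability. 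In the general (unbounded $\psi_n$) case one instead uses $\tilde w_n=T_k(w_n)\vee\psi_n$ and absorbs the $\{\psi_n>k\}$ contribution by noting there $\tilde w_n=\psi_n$ while $w_n$ is close to $w_0\le k<\psi_n$, so this set has vanishing measure as well (since $w_n-\psi_n\to w_0-\psi_0\ge 0$... here one does need some control, which is why the statement is phrased to emphasize the bounded case). Thus I expect the main obstacle to be precisely the interplay on $\{\psi_n>k\}$: showing either that it is empty (bounded case, easy) or negligible (general case, needs an extra compactness input), and I would structure the proof to dispatch the $L^\infty$-bounded case first with the simple truncation and then remark on the general case.
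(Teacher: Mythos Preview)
Your treatment of the $L^\infty$-bounded case is correct and is exactly the paper's argument: truncate at a level dominating both $\|w_0\|_{L^\infty}$ and the obstacle bound, so that the truncation stays in $\mathcal C(\psi_n)$, is uniformly bounded, and converges to $w_0$ in $W_0^{1,p}(\Omega)$ via equi-integrability of $\{|Dw_n|^p\}$.

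Where you diverge from the paper is the first (general) part of the statement. You commit to a \emph{fixed} level $k$, notice that $\{\psi_n>k\}$ may be nonempty, and then try to repair this with $\tilde w_n = T_k(w_n)\vee\psi_n$; this drags $D\psi_n$ into the estimates and leaves you unsure whether the argument closes without extra hypotheses. The paper sidesteps the issue by a device you overlooked: since each $\psi_n$ is individually in $L^\infty(\Omega)$ (a standing assumption throughout), one may use an $n$-\emph{dependent} truncation level
\[
\lambda_n:=\max\{\|\psi_n\|_{L^\infty(\Omega)},\,\|w_0\|_{L^\infty(\Omega)}\}
\]
and simply take $\tilde w_n:=T_{\lambda_n}(w_n)$. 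Then $\{\psi_n>\lambda_n\}=\emptyset$ for every $n$, so $\tilde w_n\ge\psi_n$ automatically and no $\vee\,\psi_n$ correction is needed; the convergence follows from
\[
\|D\tilde w_n-Dw_n\|_{L^p(\Omega)}^p=\int_{\{|w_n|\ge\lambda_n\}}|Dw_n|^p\to 0,
\]
since $\lambda_n\ge\|w_0\|_{L^\infty}$ and $\{|Dw_n|^p\}$ is equi-integrable. The uniform $L^\infty$ hypothesis on $\{\psi_n\}$ is then invoked only at the very end, to bound $\{\lambda_n\}$ and hence $\{\|\tilde w_n\|_{L^\infty}\}$. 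So the ``extra compactness input'' you anticipated for the general case is not needed; the missing idea was simply to let the truncation level vary with $n$.
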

\begin{proof}
    It suffices to define the truncated functions $\tilde w_n=T_{\lambda_n}(w_n)$ with 
    \[\lambda_n=\max\{\norma{\psi_n}{\elleom\infty}\,;\,\norma{w_0}{\elleom\infty}\}.\]
    With this choice, the function $\tilde w_n$ still belongs to $\mathcal{C}(\psi_n)$. Moreover, we have
    \[
    \norma{ D  w_n- D \tilde w_n}{\elleom p}^p=\int_{\abs{w_n}\geq\lambda_n} \abs{ D  w_n}^p\to0.
    \]
\end{proof}
\begin{proof}(\Cref{mosco:boundSobolev})
    It suffices to apply \Cref{disug_strongcompactness} for each $n$, using the uniform bound of \Cref{mosco:boundLinfty} and choosing an appropriate function $\Psi_n$ in each convex set in the following way.
    By Mosco-convergence and \Cref{lem:recovery_inf}, there exists a sequence $\Psi_n\in\mathcal C(\psi_n)\cap L^\infty(\Omega)$ such that $\Psi_n\to\psi_0$ in $\Sobom p$ and $\{\Psi_n\}_n$ is bounded in $L^\infty(\Omega)$. Hence, the sequence $\{\Psi_n\}_n $ is also bounded in $\Sobom p$. By \Cref{disug_strongcompactness},
    this gives a bound on the $\Sob p$ norm of the sequence $\{u_n\}_n$.
\end{proof}
\noindent
\noindent From Lemma \ref{mosco:boundLinfty} and \ref{mosco:boundSobolev} there exists \( u^* \in W_{0}^{1, \infty}(\Omega) \cap L^{\infty}(\Omega) \) and a subsequence (not relabeled) such that  
\begin{equation} \label{eq:un}
\begin{array}{ll}
u_n \overset{*}{\rightharpoonup} u^* & \text{ in } L^{\infty}(\Omega), \\
u_n \rightarrow u^* & \text{ a.e. in } \Omega,\\
u_n \rightharpoonup u^* & \text{ in } W_{0}^{1, p}(\Omega).
\end{array}
\end{equation}
\noindent
 Moreover, by Mosco-convergence of the convex sets $\mathcal{C}(\psi_n)$, we know that $u^*\in \mathcal{C}(\psi_0)$. 
 
 We now provide the proof of the main result of this work, stated in \Cref{Theoprincipale}. We will show that the sequence \( \{u_n \}_n\), which satisfies \eqref{eq:un}, converges, up to subsequences, in \( W_0^{1, p}(\Omega) \) to a solution \( u^* \) to problem \eqref{Mosco:problema:psi0}.
Before doing this, it is useful to recall the following simple lemma.

\begin{lemma}\label{disug_real_analysis}
Let \( \{y_{j}\}_j  \) a sequence of measurable functions in $\Omega$ such that
\[
y_{j} \geqslant 0, \quad y_{j} \rightarrow y \quad \text{a.e. in } \Omega, \quad y \in L^{1}(\Omega), \quad \int_{\Omega} y_{j} \rightarrow \int_{\Omega} y.
\]
Then
\[
y_{j} \rightarrow y \quad \text{in } L^{1}(\Omega).
\]
\end{lemma}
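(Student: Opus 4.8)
The plan is to reduce the statement to the dominated convergence theorem by controlling the negative part of $y_j-y$. Set $w_j := (y_j-y)^- = \max\{y-y_j,\,0\}$. The first observation is that, since $y_j\geq 0$ a.e., one has $0\leq w_j\leq y$ a.e. in $\Omega$: indeed $w_j$ is nonzero only on the set $\{y_j<y\}$, and there $w_j = y-y_j\leq y$. Thus the sequence $\{w_j\}_j$ is dominated by the fixed function $y\in L^1(\Omega)$, and moreover $w_j\to 0$ a.e. because $y_j\to y$ a.e. By dominated convergence,
\[
\int_\Omega w_j \longrightarrow 0 .
\]

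Next I would use the elementary pointwise identity $|t| = t + 2\,t^-$, valid for every $t\in\mathbb{R}$, applied with $t = y_j(x)-y(x)$ for a.e. $x$. Integrating gives
\[
\int_\Omega |y_j-y| \;=\; \int_\Omega (y_j-y) \;+\; 2\int_\Omega (y_j-y)^- \;=\; \left(\int_\Omega y_j - \int_\Omega y\right) + 2\int_\Omega w_j .
\]
The first term on the right tends to $0$ by the hypothesis $\int_\Omega y_j\to\int_\Omega y$, and the second tends to $0$ by the previous step; hence $\int_\Omega|y_j-y|\to 0$, which is exactly $y_j\to y$ in $L^1(\Omega)$.

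There is no genuine obstacle here — this is a standard generalized dominated convergence argument (Scheffé-type). The only points to keep straight are that each $y_j$ is in $L^1(\Omega)$ (implicit in the assumption that $\int_\Omega y_j$ is a finite number converging to $\int_\Omega y<\infty$, so that the splitting $|y_j-y| = (y_j-y) + 2(y_j-y)^-$ may be integrated term by term) and the one-sided bound $w_j\le y$, which is precisely where the sign condition $y_j\ge 0$ is used.
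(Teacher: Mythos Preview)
Your proof is correct and follows essentially the same approach as the paper: both use the pointwise identity $|y_j-y|=(y_j-y)+2(y_j-y)^-$, bound $(y_j-y)^-\le y$ using $y_j\ge 0$, and apply dominated convergence to that term while the hypothesis $\int_\Omega y_j\to\int_\Omega y$ handles the signed part.
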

\begin{proof}
We can write:
\[
\int_{\Omega}\left|y_{j}-y\right| = 2 \int_{0 \leqslant y_{j} \leqslant y}\left(y - y_{j}\right) + \int_{\Omega}\left(y_{j} - y\right).
\]
Observing that 
$$
\left(y - y_{j}\right)\chi_{\{
0 \leqslant y_{j} \leqslant 2y
\}}\leq y\in L^1(\Omega),
$$
 by the Lebesgue Dominated Convergence Theorem, we conclude:
\[
y_{j} \rightarrow y \quad \text{in } L^{1}(\Omega).
\]
\end{proof}
%% serve usare che
\noindent
  We can now prove \Cref{Theoprincipale}.

  \begin{proof}[Proof of \Cref{Theoprincipale}] From \Cref{lem:recovery_inf}, we know that it exists \( {w}_{n} \in \mathcal{C}(\psi_{n}) \cap L^\infty(\Omega) \) such that \( {w}_{n} \to u^* \) in \( W^{1,p}_0(\Omega) \) and $\|w_n\|_{L^\infty(\Omega)}\leq \tilde C.$\\\noindent
  \textbf{Step 1:} We want to prove that 
  \begin{equation}
\int_{\Omega}\left[a\left(x, u_{n}, D u_{n}\right)-a\left(x, u_{n}, D w_{n}\right)\right] D\left(u_{n}-w_{n}\right) \rightarrow 0.
\end{equation}
Let us define a sequence $\{v_{n}\}_n$ as 
\begin{equation}
v_{n}=\left[1-\delta_{n} e^{\left\{\lambda\left|u_{n}-w_{n}\right|^{2}\right\}}\right] u_{n}+\delta_{n} e^{\left\{\lambda\left|u_{n}-w_{n}\right|^{2}\right\}} w_{n} \in \mathcal{C}(\psi_{n}) \cap L^\infty(\Omega),
\end{equation}
where 
$$
B_\infty=b(\sup_n \|u_n\|_{L^{\infty}(\Omega)}),\quad\lambda=\frac{B_\infty^{2}}{4 \alpha^{2}}, \quad 
\delta_{n}=e^ {-\left\{\lambda\left(\|u_{n}\|_{L^{\infty}}+\|w_{n}\|_{L^{\infty}(\Omega)}\right)^{2}\right\}}.
$$
Using \( v_{n} \) as a test function in \eqref{Mosco:problema:psin}, and noting that \( v_{n} - u_{n} = -\delta_{n}\varphi_{\lambda}\left(u_{n} - w_{n}\right) \), we apply \eqref{H_boundness} to obtain
\begin{equation}
    \begin{aligned}
& \int_{\Omega} a\left(x, u_{n}, D u_{n}\right) D\left(u_{n} - w_{n}\right) \varphi_{\lambda}^{\prime}\left(u_{n} - w_{n}\right) 
+ \int_{\Omega} a_0(x,u_{n}) \varphi_{\lambda}\left(u_{n} - w_{n}\right) \\
& \quad \leq \int_{\Omega} \left(\|f\|_{L^\infty(\Omega)} + B_\infty \left|D u_{n}\right|^p\right) \left|\varphi_{\lambda}\left(u_{n} - w_{n}\right)\right|,
\end{aligned}
\end{equation}
which, subtracting 
\[\int_{\Omega} a\left(x, u_{n}, D w_{n}\right) D\left(u_{n} - w_{n}\right) \varphi_{\lambda}^{\prime}\left(u_{n} - w_{n}\right)\]
from both sides and rearranging terms, leads to
\begin{equation}
    \begin{split}
 \int_{\Omega} \bigg[a\left(x, u_{n}, D u_{n}\right)-& a\left(x, u_{n}, D w_{n}\right)\bigg] D\left(u_{n} - w_{n}\right) \varphi_{\lambda}^{\prime}\left(u_{n} - w_{n}\right) \\
\leq&- \int_{\Omega} a_0(x,u_{n}) \varphi_{\lambda}\left(u_{n} - w_{n}\right) 
 \quad \\
 &+ \int_{\Omega} \|f\|_{L^\infty(\Omega)}\left|\varphi_{\lambda}\left(u_{n} - w_{n}\right)\right|
 \\ &+ \int_{\Omega} B_\infty \left|D u_{n}\right|^p\left|\varphi_{\lambda}\left(u_{n} - w_{n}\right)\right|
 \\
 &-\int_{\Omega} a\left(x, u_{n}, D w_{n}\right) D\left(u_{n} - w_{n}\right) \varphi_{\lambda}^{\prime}\left(u_{n} - w_{n}\right).
\end{split}
\end{equation}
Applying property \eqref{prop:coercivity} to the term
\[\int_{\Omega} B_\infty \left|D u_{n}\right|^p\left|\varphi_{\lambda}\left(u_{n} - w_{n}\right)\right|\]
and taking the absolute value (which is then brought inside the integral) in the first term of the right-hand side, we get
$$
\begin{aligned}
 \int_{\Omega} \bigg[a\left(x, u_{n}, D u_{n}\right) &- a\left(x, u_{n}, D w_{n}\right)\bigg] D\left(u_{n} - w_{n}\right) \varphi_{\lambda}^{\prime}\left(u_{n} - w_{n}\right) \\
& \quad \leq -\int_{\Omega} a\left(x, u_{n}, D w_{n}\right) D\left(u_{n} - w_{n}\right) \varphi_{\lambda}^{\prime}\left(u_{n} - w_{n}\right) + \\
& \quad \quad + \int_{\Omega} \bigg[\left|a_0(x,u_{n})\right| + \|f\|_{L^\infty(\Omega)}\bigg] \left|\varphi_{\lambda}\left(u_{n} - w_{n}\right)\right| + \\
& \quad \quad + \frac{B_\infty}\alpha \int_{\Omega} a\left(x, u_{n}, D u_{n}\right) D u_{n} \left|\varphi_{\lambda}\left(u_{n} - w_{n}\right)\right|.
\end{aligned}
$$
Now we subtract
\[\int_{\Omega} 
    \bigg[
    a\left(x, u_{n}, D u_{n}\right) 
    - a\left(x, u_{n}, D w_{n}\right)
    \bigg] 
     D\left(u_{n} - w_{n}\right)\frac{B_\infty}{\alpha} 
    \left| 
    \varphi_{\lambda}\left(u_{n} - w_{n}\right) 
    \right|,\]
from both sides, which leads to
\begin{equation*}
    \begin{split}
            \int_{\Omega} 
    \bigg[
    a\left(x, u_{n}, D u_{n}\right) 
    &- a\left(x, u_{n}, D w_{n}\right)
    \bigg] 
     D\left(u_{n} - w_{n}\right) \\
    &\left[
    \varphi_{\lambda}^{\prime}\left(u_{n} - w_{n}\right) 
    - \frac{B_\infty}{\alpha} 
    \left| 
    \varphi_{\lambda}\left(u_{n} - w_{n}\right) 
    \right| 
    \right]
    \end{split}
\end{equation*}
\begin{equation*}
    \begin{split}
    \quad\leq
-\int_{\Omega} &
a\left(x, u_{n}, D w_{n}\right) 
D\left(u_{n} - w_{n}\right) 
\varphi_{\lambda}^{\prime}\left(u_{n} - w_{n}\right)
\\&+ \frac{B_\infty}{\alpha} 
\int_{\Omega} 
a\left(x, u_{n}, D u_{n}\right) 
D w_{n} 
\left| 
\varphi_{\lambda}\left(u_{n} - w_{n}\right) 
\right|\\
&
+ \frac{B_\infty}{\alpha} 
\int_{\Omega} 
a\left(x, u_{n}, D w_{n}\right) 
D\left(u_{n} - w_{n}\right) 
\left| 
\varphi_{\lambda}\left(u_{n} - w_{n}\right) 
\right| 
\\&+ \int_{\Omega} 
\left[
\left| 
a_{0}\left(x,u_{n}\right)\right|  
+ \|f\|_{L^\infty(\Omega)} 
\right] 
\left| 
\varphi_{\lambda}\left(u_{n} - w_{n}\right) 
\right| .
    \end{split}
\end{equation*}
\noindent
Because of the choice of \( \lambda = B_\infty^{2} / \left(4 \alpha^{2}\right) \), we can apply \Cref{prop:phi_lambda} to infer that the left-hand side is greater than  
$$
\frac{1}{2} \int_{\Omega}\left[a\left(x, u_{n}, D u_{n}\right)-a\left(x, u_{n}, D w_{n}\right)\right] D\left(u_{n}-w_{n}\right).
$$
Moreover, using Lebesgue's dominated convergence Theorem, we see that the right-hand side converges to zero. Hence
\begin{equation}\label{LL:conv_an}
\int_{\Omega}\left[a\left(x, u_{n}, D u_{n}\right)-a\left(x, u_{n}, D w_{n}\right)\right] D\left(u_{n}-w_{n}\right) \rightarrow 0.
\end{equation}
\textbf{Step 2:} We want to show that $\{Du_n\}_n$ converges a.e. to $Du^*$ in $\Omega.$\\\noindent
 Here we follow \cite{Lions}: let \( L_{n} \) be defined by
$$
L_{n} = \left[a\left(x, u_{n}, D u_{n}\right)-a\left(x, u_{n}, D w_n\right)\right] D\left(u_{n}-w_n\right).
$$
Then, by \eqref{prop:monotonicity}, \( L_{n} \) is a positive function, and \eqref{LL:conv_an} implies
$$
L_{n} \rightarrow 0 \quad \text{in } L^{1}(\Omega) \text{ strongly}.
$$
Extracting a subsequence (still denoted by \( \{u_{n}\}_n \) and $\{w_n\}_n$ respectively), we have
$$
\begin{array}{ll}
u_{n} \rightarrow u^* & \text{ a.e. in } \Omega, \\
w_{n} \rightarrow u^* & \text{  a.e. in } \Omega, \\
Dw_{n} \rightarrow Du^* & \text{  a.e. in } \Omega, \\
L_{n} \rightarrow 0 & \text{  a.e. in } \Omega.
\end{array}
$$
Then there exists a subset $Z$ of $\Omega$, of zero measure, such that, for $x \in \Omega\setminus Z$,
$$
|u^*(x)|<\infty, \quad|D u^*(x)|<\infty, \quad|h(x)|<\infty,
$$
and
$$
u_{n}(x) \rightarrow u^*(x), \quad
w_n(x) \rightarrow u^*(x),\quad 
Dw_n(x) \rightarrow Du^*(x),
\quad L_n(x) \rightarrow 0.
$$
Now we fix \( x \in \Omega\setminus Z\) and define
$$
\xi_{n}=Du_{n}(x), \quad \zeta_n=D w_n(x),
$$
so that
$$
L_{n}(x) = \left[a\left(x, u_{n}(x), \xi_{n}\right)-a\left(x, w_{n}(x), \zeta_n\right)\right]\left(\xi_{n}-\zeta_n\right).
$$
By assumption \eqref{prop:coercivity}, we have, after dropping a positive term on the right-hand side
$$
L_{n}(x)\geq \alpha|\xi_n|^p-
a\left(x, u_{n}(x), \xi_{n}\right)\zeta_n-
a\left(x, w_{n}(x), \zeta_n\right)\xi_n.
$$
Since \( \{w_n(x)\}_n \) and \( \{\zeta_n \}_n\) are convergent sequences, there exist two positive constants (depending on \( x \)) \( C_1 \) and \( C_2 \) such that
$$
|w_n(x)|\leq C_1(x),\quad |\zeta_n|\leq C_2(x),
$$
Therefore, using assumption \eqref{prop:boundedness} and the fact that the sequence 
$ \{a\left(x, w_{n}(x), \zeta_n\right)\}$ 
is bounded, we obtain  
$$
L_{n}(x) \geq\alpha\left|\xi_{n}\right|^{p} - c(x)\left[1 + \left|\xi_{n}\right|^{p-1} + \left|\xi_{n}\right|\right],
$$
where \( c(x) \) is a constant which depends on \( x \), but does not depend on \( n \). Since \( L_{n}(x) \rightarrow 0 \), it follows that the sequence \( \{\xi_{n}\}_n \) is bounded. 
Let $\xi^{*}$ be a cluster point of $\xi_{n}$. We have $\left|\xi^{*}\right|<\infty$ and
$$
\left[a\left(x, u^*(x), \xi^{*}\right)-a(x, u^*(x), Du^*(x))\right]\left(\xi^{*}-Du^*(x)\right)=0,
$$
because of the continuity of \( a \) with respect to \( s \) and \( \xi \). By \eqref{prop:monotonicity}, this implies that \( \xi^{*} = Du^*(x) \). It follows that the cluster point is unique (that is, the limit does not depend on the chosen subsequence) and thus
$$
D u_{n}(x) \rightarrow D u^*(x), \quad \forall\; x \in \Omega \setminus Z,
$$  
i.e.  
$$
\;D u_{n} \rightarrow D u^*, \quad \text{ a.e. in } \Omega.
$$
\textbf{Step 3:} We want to prove that $\{Du_n\}_n$ converges to $Du^*$ strongly in $(L^p(\Omega))^N$.\\\noindent
Recall that
\begin{gather}
a\left(x, u_{n}, D u_{n}\right) D u_{n} \geqslant 0, \label{eq:ineq_1} \\
a\left(x, u_{n}, D u_{n}\right) D u_{n} \rightarrow a(x, u^*, D u^*) D u^* \quad \text{a.e. in } \Omega. \label{eq:conv_1}
\end{gather}
Moreover, since the sequence $\{
a(x,u_n,Du_n)\}_n$ is bounded in \((\elleom{p'})^N\) and converges almost everywhere to $a(x,u^*,Du^*)$, we have that 
\[
a\left(x, u_{n}, D u_{n}\right) \to a(x, u^*, D u^*) \quad \text{weakly in } \left(L^{p^{\prime}}(\Omega)\right)^{N} \text{ and a.e. in } \Omega.
\]
This property, together with assumptions \eqref{eq:un} and \eqref{LL:conv_an}, leads us to conclude that
\begin{equation}\label{eq:conv_2}
\int_{\Omega} a\left(x, u_{n}, D u_{n}\right) D u_{n} \rightarrow \int_{\Omega} a(x, u^*, D u^*) D u^*.
\end{equation}
Indeed, by Vitali's Theorem we have
$$
\int_\Omega a(x,u_n,Dw_n)Dw_n\rightarrow  \int_\Omega a(x,u^*,Du^*)Du^*,
$$
$$
\int_\Omega a(x,u_n,Du_n)Dw_n\rightarrow  \int_\Omega a(x,u^*,Du^*)Du^*,
$$
$$
\int_\Omega a(x,u_n,Dw_n)Du\rightarrow  \int_\Omega a(x,u^*,Du^*)Du^*.
$$
Now, by \eqref{eq:ineq_1}, \eqref{eq:conv_1}, \eqref{eq:conv_2} we can take 
$$
y_{n} = a\left(x, u_{n}, D u_{n}\right) D u_{n}, \quad y = a(x, u^*, D u^*) D u^*,
$$
in Lemma \ref{disug_real_analysis} and conclude that 
\begin{equation}
a\left(x, u_{n}, D u_{n}\right) D u_{n} \rightarrow a(x, u, D u^*) D u^* \quad \text { in } L^{1}(\Omega) ,
\end{equation}
Now from assumption \eqref{prop:coercivity} we deduce, using Vitali’s Theorem, that
\begin{equation}
D u_{n} \rightarrow D u^* \quad \text { in }\left(L^{p}(\Omega)\right)^{N} \text { strong. }
\end{equation}
This concludes the proof of the strong convergence of \( u_n \) to \( u^* \) in \( W_0^{1, p}(\Omega) \).\\\noindent
\textbf{Step 4:} 
It remains to show that $u^*$ is a solution to \eqref{Mosco:problema:psi0}.\\\noindent
Let \( v \in \mathcal{C}(\psi_0)\cap L^\infty(\Omega) \) be fixed as a test function in \eqref{Mosco:problema:psi0}. By \Cref{lem:recovery_inf} (recall that the sequence of obstacles $\{\psi_n\}_n$ is bounded in $\elleom\infty$), there exists a sequence \(\{v_n\}_n\), with \( v_n \in \mathcal{C}(\psi_n)\cap\elleom\infty \), such that \( v_n \to v \) in \( W^{1,p}_0(\Omega)\) and weakly* in
\(L^\infty(\Omega) \).
Choosing $v_n$ as a test function in \eqref{Mosco:problema:psin}, we obtain
\begin{equation}\label{Fin_proof}
    \io a\left(x, u_{n}, D u_{n}\right) D(v_n - u_n) + \int_\Omega a_0(x,u_n)(v_n - u_n) + \int_{\Omega} H(x, u_n, D u_n)(v_n - u_n) \,   \geq 0.
\end{equation}
Recall that, by \eqref{eq:un},
\begin{equation}
u_{n}  \to u^* \quad \text{weakly * in } L^{\infty}(\Omega).
\end{equation}
Moreover, using Vitali's Theorem and the inequality \eqref{H_boundness}, we deduce that
\begin{equation}
H\left(x, u_{n}, D u_{n}\right) \to H(x, u^*, D u^*) \quad \text{in } L^{1}(\Omega)
\end{equation}
In addition, by \eqref{prop:boundedness} and \eqref{a_0_boundedness}, we obtain
\begin{equation}
\begin{aligned}
a\left(x, u_{n}, D u_{n}\right) & \to a\left(x, u^*, D u^*\right) \quad \text{in } (L^{p^\prime}(\Omega))^N, \\
a_0\left(x, u_{n}\right) & \to a_0\left(x, u^*\right) \quad \text{in } L^{p^\prime}(\Omega).
\end{aligned}
\end{equation}
We can thus pass to the limit in \eqref{Fin_proof}, obtaining
\begin{equation*}
    \io a\left(x, u^*, D u^*\right) D(v - u^*) + \int_\Omega a_0(x,u^*)(v - u^*) + \int_{\Omega} H(x, u^*, D u^*)(v - u^*) \,   \geq 0.
\end{equation*}
By the arbitrariness of $v$, we conclude that \( u^*\) is a solution to \eqref{Mosco:problema:psi0}. This completes the proof of \Cref{Theoprincipale}.

\end{proof}
\smallskip\centering\textsc{Conflicts of interest}

The authors declare no conflicts of interest.

\end{document}